\newtheorem{thm}{Theorem}[section]
\newtheorem{prop}[thm]{Proposition}
\theoremstyle{definition}
\newtheorem{defn}[thm]{Definition}
\theoremstyle{remark}
\newtheorem{rem}[thm]{Remark}
\numberwithin{equation}{section}
\newcommand{\D}{\mathbb{D}}
\newcommand{\R}{\mathbb{R}}
\newcommand{\h}{\mathbb{H}}
\def\Re{{\sf Re}\,}
\def\Im{{\sf Im}\,}
\begin{document}

\title{A non-autonomous version of the Denjoy-Wolff Theorem}

\author{Tiziano Casavecchia}
\address{Dipartimento di Matematica, Largo Bruno Pontecorvo 5, Universita di Pisa, 56127 Pisa, Italia}
\email{t.casavecchia@gmail.com}

\author{Santiago D\'{\i}az-Madrigal}
\address{Departamento de Matem\'atica Aplicada II, Camino de los Descubrimientos (ETSII),
Universidad de Sevilla, 41092 Sevilla, Spain}
\email{madrigal@us.es}

\subjclass[2000]{Primary 30D05, 30C80; Secondary 37L05}

\keywords{Denjoy-Wolff theorem; Loewner chains; iteration theory; evolution families}

\date{\today}

\thanks{Partially supported by the \textit{Ministerio
de Ciencia e Innovaci\'on} and the European Union (FEDER),
project MTM2006-14449-C02-01, by \textit{La Consejer\'{\i}a de
Educaci\'{o}n y Ciencia de la Junta de Andaluc\'{\i}a} and by
the European Science Foundation Research Networking Programme
HCAA}


\begin{abstract}
The aim of this work is to establish the celebrated Denjoy-Wolff Theorem in the context of generalized Loewner chains. In contrast with the classical situation where essentially convergence to a certain point in the closed unit disk is the unique possibility, several new dynamical phenomena appear in this framework. Indeed, $\omega$-limits formed by suitable closed arcs of circumferences appear now as natural possibilities of asymptotic dynamical behavior.
\end{abstract}

\maketitle

\section{Introduction}

One of the most celebrated results about the dynamics of an arbitrary analytic self-map $\varphi$ of the unit disk is the Denjoy-Wolff Theorem which asserts that if $\varphi$ is not an elliptic automorphism, then the sequence of iterates $(\varphi_n)$ converges to a constant (called the Denjoy-Wolff point of $\varphi$) in the closed unit disk, locally uniformly in $\D$. This theorem has been generalized in many directions beginning with the well-known Classification Theorem which describes the dynamics of a holomorphic self-map of a hyperbolic Riemann surface \cite[Chapter 5]{Milnor}. Extensions to smooth multiple connected domains, to several complex variables and even to complex Banach spaces have been also considered.

In a different line of research, a continuous analog of the Denjoy-Wolff Theorem has been proved for semigroups of analytic self-maps of $\D$ (see remark \ref{rem-semigroups}). Namely, if $(\phi_t)$ is such a semigroup and there is no $\phi_t\ (t>0)$ which is an elliptic automorphism, then again the family of iterates $(\phi_t)$ converges to a constant in the closed unit disk when $t$ tends to $\infty$, locally uniformly in $\D$. Moreover, this constant is a fixed point (in the angular sense if it belongs to $\partial\D$) for all the iterates $\phi_t$ of the semigroup.

Those (uniparametric) semigroups of analytic functions sharing a fixed point in $\D$ can be seen as very particular cases of what are usually called classical (biparametric) evolution families in the unit disk (see section \ref{evolution families} for further details). These families are, basically, the solutions of the famous Radial Loewner Equation which, we want to recall, is the non-autonomous differential equation
$$
\begin{cases}
\dfrac{dz}{dt} =-zp(z,t)& \quad\text{for almost every }t\in\lbrack s,\infty)\\
z(s)   =z&
\end{cases}
$$
where $s\geq0$ and $p:\mathbb{D}\times\lbrack0,+\infty)\rightarrow\mathbb{C}$
is measurable in $t$, holomorphic in $z$, and $p(0,t)=1$ and $\Re p(\cdot,t)\geq 0$, for all
$t\geq 0$.
As we have remarked, the corresponding solutions (which are defined for all $t\geq s$)
$
t \mapsto\varphi_{s,t}(z), \ z\in\D,\ s\geq~0
$
are called in the literature evolution families (sometimes transition functions, semigroup elements, ...). The natural question of asymptotic dynamical behavior of those families has also been treated in the literature sometimes linked to the so-called Loewner parametrization method for univalent functions. Namely, it is known that, for every $s\geq 0$, the sequence of iterates $(\varphi_{s,t})_t$ converges to zero when $t$ goes to $\infty$, locally uniformly in $\D$. For a detailed account of this result and, in general, for the classical Loewner theory, we refer the reader to the excellent monograph \cite[Chapter 6]{Pommerenke}. It is worth mentioning that when the above function $p$ does not depend on $t$ (so we have an autonomous differential equation) and fixing $s=0$, the corresponding solutions generate in a natural way a semigroup of analytic functions whose iterates fix the point zero.

Classical Loewner theory has been also extended in many directions. Among them, we owe to cite the theory of the Chordal Loewner Equation \cite[chapter~IV\S7]{Aleksandrov}, the study of Loewner chains in several complex variables \cite{Gra-Ko-book} or the
Schramm\,--\,Loewner equation (SLE, also known as
stochastic Loewner evolution), introduced in 2000 by
Schramm~\cite{Schramm}. Roughly speaking, SLE is a probabilistic version of the previously known radial and chordal Loewner theory.

Quite recently, a full generalization (up to hyperbolic complex manifolds and without common fixed point requirements) of both the Radial and the Chordal Loewner Equation as well as the theory of semigroups of analytic functions in the unit disk has been obtained \cite{BCM1}, \cite{BCM2}. The corresponding and associated theory of generalized Loewner chains have been exposed in \cite{Contreras-Diaz-Pavel}. In this wide context, we deal with non-autonomous differential equations (we call them Herglotz vector fields by obvious reasons) of the form
$$
\begin{cases}
\dfrac{dz}{dt}=(z-\tau(t))(\overline{\tau(t)}z-1)p(z,t)& \quad\text{for almost every }t\in\lbrack s,\infty)\\
z(s)   =z&
\end{cases}
$$
where $s\geq0$, $\tau:[0,+\infty)\rightarrow\overline{\D}$ is measurable and $p:\mathbb{D}\times\lbrack0,+\infty)\rightarrow\mathbb{C}$
is a generalized Herglotz function in $\D$, that is, $p$
is measurable in $t$, holomorphic in $z$, and, $\Re p(\cdot,t)\geq 0$, for all
$t\geq 0$. Again, it is proved the corresponding solutions $t \mapsto\varphi_{s,t}(z), \ z\in\D,\ s\geq 0$ are defined for all $t\geq s$.

The aim of this paper is that, assuming that the function $\tau$ is constant, to analyze the dynamical behavior of the family $(\varphi_{s,t}(z))_t$, for any $s\geq0$ and any $z\in\D$. We notice that this problem includes as very particular cases the one treated for the classical Radial Loewner Equation and the one related to the Continuous Denjoy-Wolff Theorem. Anyhow, one of the key and new points now is we allow $\tau$ to be located in the boundary of $\D$.

The plan of the paper is the following. At the end of this initial section, we recall several facts about the hyperbolic metric in the unit disk, fixing some notations used throughout the paper. For further details, see \cite[Section 2.2]{Milnor}. In section two, and in order the paper to be reasonable self-contained, we outline briefly some ingredients (evolution families and Herglotz vector fields) of that mentioned generalized Loewner chains theory. In section three, we present the main theorem of the paper which, for the sake of readability, has been divided in two parts: the boundary case (Theorem \ref{mainth boundary}) and the inner case (Theorem \ref{mainth inner}). Moreover, the inner case is shown to be a corollary of the boundary case. In this way, we give an alternative and very different approach to the formerly mentioned asymptotic behavior of solutions of the classical Radial Loewner Equation. Finally, in the last section, we have collected several additional results about dynamical topics frequently considered in the literature about the classical Denjoy-Wolff Theorem.


\subsection{Hyperbolic Geometry in the Poincare Disk}

Given an absolutely continuous curve $\gamma :[a,b]\rightarrow \mathbb{D}$,
the associated hyperbolic length of $\gamma $ is
\begin{equation*}
\ell _{\mathbb{D}}(\gamma ):=2\int_{a}^{b}\frac{|\gamma ^{\prime }(t)|}{%
1-|\gamma (t)|^{2}}dt\in \lbrack 0,+\infty ).
\end{equation*}%
If there is no possible confussion, we will write $\ell _{\mathbb{D}%
}(\gamma ([a,b]))$ instead of $\ell _{\mathbb{D}}(\gamma )$. Additionally,
given two points $z_{1},z_{2}\in \mathbb{D},$ the hyperbolic distance from $%
z_{1}$ to $z_{2}$ is defined as%
\begin{equation*}
\rho _{\mathbb{D}}(z_{1},z_{2})=\inf_{\gamma }\ell _{\mathbb{D}}(\gamma )
\end{equation*}%
where $\gamma $ runs through all absolutely continuous curves $\gamma
:[a,b]\rightarrow \mathbb{D}$ such that $\gamma (a)=z_{1}$ and $\gamma
(b)=z_{2}.$ Using univalent functions, the above
hyperbolic metric and related notions can be also considered in any proper
simply connected domain of the complex plane. Namely, the hyperbolic metric
for the right half-plane $\mathbb{H}$ is
\begin{equation*}
\rho _{\mathbb{H}}(w_{1},w_{2}):=\rho _{\mathbb{D}}(\sigma
^{-1}(w_{1}),\sigma ^{-1}(w_{2})),\text{ }w_{1},w_{2}\in \mathbb{H},
\end{equation*}%
where $\sigma (z):=\dfrac{1+z}{1-z},$ $z\in \mathbb{D}$ is the classical
Cayley map.

We define the (open)
hyperbolic disk of center $a\in \mathbb{D}$ and radius $r>0$ as%
\begin{equation*}
D_{H}(a,r):=\{z\in \mathbb{D}:\rho _{\mathbb{D}}(z,a)<r\}.
\end{equation*}%
This subset $D_{H}(a,r)$ is in fact a circle, that is, an open euclidian
disk $D(\widetilde{a},\widetilde{r})$ where, again, $\widetilde{a}=%
\widetilde{a}(a,r)\in \mathbb{D}$ denotes the center and $\widetilde{r}=%
\widetilde{r}(a,r)>0$ the corresponding radius. There are also other
circles which play a prominent role in the hyperbolic geometry: the
horocycles. Given a point $\xi \in \partial \mathbb{D}$ and a positive
number $k>0,$ the horocycle lied at the point $\xi $ and with factor $k$ is
defined as
\begin{equation*}
\mathrm{Hor}(\xi ,k):=\{z\in \mathbb{D}:k_{\mathbb{D}}(\xi ,z):=\frac{|\xi
-z|^{2}}{1-|z|^{2}}<k\}
\end{equation*}%
and describes an euclidian disk internally tangent to $\partial \mathbb{D}
$ at the point $\xi .$ Roughly speaking, horocycles can be considered as
limits of appropriated hyperbolic disks.

As usual, a closed arc of an euclidian circumference $C=C(p,R)$ of center $%
p\in \mathbb{C}$ and radius $R>0$ is a set of the form
\begin{equation*}
A:=\{p+Re^{i\theta }:\theta _{1}\leq \theta \leq \theta _{2}\},\text{ where }%
0<\theta _{2}-\theta _{1}\leq 2\pi .
\end{equation*}%
In case $\theta _{2}-\theta _{1}<2\pi ,$ the arc $A$ is said to be a proper
arc of the circumference $C$ and the points $p+Re^{i\theta _{1}},$ $%
p+Re^{i\theta _{2}}$ are called the extreme points of $A$. Circumferences
are also present in the hyperbolic context. Namely, the hyperbolic circumference of center $a\in \mathbb{D}$ and radius
$r>0$ is defined as $\ C_{H}(a,r):=\{z\in \mathbb{D}:\rho _{\mathbb{D}%
}(z,a)=r\}.$ Certainly, $C_{H}(a,r)=C(\widetilde{a}(a,r),\widetilde{r}%
(a,r)), $ for some $\widetilde{a}\in \mathbb{D}$ and $\widetilde{r}>0,$.

We recall that the hyperbolic angular extent of a closed arc $A$ of $%
C_{H}(a,r)$ is the number $\dfrac{\ell _{H}(A)}{\sinh (r)}.$ This definition
is modeled on the well-known formula saying that the hyperbolic length of $%
C_{H}(0,r)$ is $2\pi \sinh (r).$ Moreover, if $\gamma (\theta )=Re^{i\theta
},$ with $R\in (0,1)$ and $\theta \in \lbrack \theta _{1},\theta _{2}]$ $%
(0<\theta _{2}-\theta _{1}\leq 2\pi ),$ it can be checked that
\begin{equation*}
\dfrac{\ell _{H}(\gamma )}{\sinh (r)}=\theta _{2}-\theta _{1},
\end{equation*}%
where $r=\log \frac{1+R}{1-R}.$ In fact, $C(0,R)=C_{H}(0,r).$ In other
words, when the center is the origin, the hyperbolic angular extent of a
closed arc is nothing but its usual euclidian angular extent.

Bearing in mind the above comments and the results of this paper, the following definition seems to
be appropriated.

\begin{defn}
Given $\xi \in \partial \mathbb{D}$ and $k>0$, the hyperbolic angular extent
of a closed arc $A\subset \mathbb{D}$ located on the boundary of the
horocycle $\mathrm{Hor}(\xi ,k)$ will be defined as $\dfrac{\ell _{H}(A)}{k}\in
[0,+\infty ).$
\end{defn}


\section{Loewner Chains, Evolution Families and Herglotz Vector Fields}\label{evolution families}
All the material shown in this section is contained in \cite{BCM1} or \cite{Contreras-Diaz-Pavel} or can be easily deduced from results appearing in these two papers.

\begin{defn}
A biparametric ($0\leq s\leq t<+\infty$) family $(\varphi_{s,t})$ of holomorphic self-maps
of the unit disk is called a (continuous) evolution family if

\begin{enumerate}
\item $\varphi_{s,s}=id_{\mathbb{D}},$ for all $s\geq0,$

\item $\varphi_{s,t}=\varphi_{u,t}\circ\varphi_{s,u}$ for all $0\leq
s\leq u\leq t<+\infty,$

\item for all $z\in\mathbb{D}$ and for all $T>0$ there exists a
non-negative function $k_{z,T}\in L^{1}([0,T],\mathbb{R})$ such that
\begin{equation*}
|\varphi_{s,u}(z)-\varphi_{s,t}(z)|\leq\int_{u}^{t}k_{z,T}(\xi)d\xi
\end{equation*}
for all $0\leq s\leq u\leq t\leq T.$
\end{enumerate}
\end{defn}

\begin{rem}\label{rem-semigroups} Perhaps, the simplest example of those families is the one related to semigroups of analytic functions $(\phi_t)$. This notion means that $t\mapsto\phi_{t}$ is a continuous homomorphism
from the additive semigroup of non-negative real numbers into
the composition semigroup of holomorphic self-maps of
$\D$. Now, defining $\varphi_{s,t}:=\phi_{t-s}$, for  $0\leq s\leq
t<+\infty$, we have $(\varphi_{s,t})$ is an evolution family in $\D$.
For further information about semigroups of analytic functions, we refer the reader to
\cite{R-S2} or \cite{Shoiket}.
\end{rem}
\begin{rem}
Evolution families are always univalent and locally absolutely
continuous with respect to the variables $s$ and $t.$
\end{rem}

With the help of appropriated univalent maps, it is possible to transfer the
above definition to general simply connected domains different from the whole complex plane. For instance, a
biparametric family $(\phi _{s,t})$ of holomorphic self-maps of $\mathbb{H}$ is said to
be a (continuous) evolution family in $\mathbb{H}$, if for some
biholomorphic function $f$ mapping $\mathbb{D}$ onto $\mathbb{H}$, the
biparametric family%
\begin{equation*}
\varphi _{s,t}(z):=f^{-1}\circ\phi _{s,t}\circ f(z),\text{ }%
z\in \mathbb{D}\text{, }0\leq s\leq t,
\end{equation*}%
is a (continuous) evolution family in $\mathbb{D}$. It can be checked that,
here, the word ``some" can be replaced by ``any".

A fundamental fact about evolution families is that they are solutions (in the weak sense) of certain non-autonomous differential equations. Following Cara\-theodory's ODE theory, we recall the notion of weak holomorphic vector field which allows to assure the existence of well-defined ``trajectories'' for the corresponding initial value problems associated with arbitrary initial data in $[0,+\infty)\times\D$.

\begin{defn}
\label{Definicion-VF} A weak holomorphic vector field on a simply connected domain $\Omega$
is a function
$G:\Omega\times\lbrack0,+\infty)\rightarrow \mathbb{C}$
with the following properties:

\begin{enumerate}
\item For all $z\in\Omega,$ the function $t\in\lbrack
0,+\infty)\mapsto G(z,t)$ is measurable.

\item For all $t\in\lbrack0,+\infty),$ the function $
z\in \Omega\mapsto G(z,t)$ is holomorphic.

\item For any compact set $K\subset\Omega$ and for all $T>0$ there
exists a non-negative function $k_{K,T}\in
L^{1}([0,T],\mathbb{R})$ such that
\[
|G(z,t)|\leq k_{K,T}(t)
\]
for all $z\in K$ and for almost every $t\in\lbrack0,T].$
\end{enumerate}
\end{defn}

The relationship between evolution families and weak holomorphic vector fields is presented in our next two theorems (see also Theorems 4.8, 5.2 and 6.2 from \cite{BCM1}).

\begin{thm}
Let $(\varphi _{s,t})$ be an evolution family in the unit disk. Then, there
exists a weak holomorphic vector field $G$ in the unit disk such that, for
any $z\in \mathbb{D}$ and for any $s\geq 0$, the solution (in the weak
sense) of the Cauchy problem
\begin{equation*}
\left\{
\begin{array}{ll}
\dfrac{dz}{dt}=G(z,t) & t\geq s \\
z(s)=z &
\end{array}%
\right.
\end{equation*}%
is exactly the map $t\in \lbrack s,+\infty )\mapsto \varphi _{s,t}(z)\in
\mathbb{D}$. Moreover, there exist $%
\tau :[0,+\infty )\mathbb{\longrightarrow }\overline{\mathbb{D}}$ measurable
and $p:\mathbb{D}\times \lbrack 0,+\infty )\longrightarrow \mathbb{C}$ a
generalized Herglotz function in the unit disk such that%
\begin{equation*}
G(z,t)=(z-\tau (t))(\overline{\tau (t)}z-1)p(z,t)\text{  }z\in
\mathbb{D},\text{ }t\geq 0.
\end{equation*}
\end{thm}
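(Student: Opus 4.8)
The plan is to prove this in two stages, mirroring the structure of Carath\'eodory's ODE theory as applied in \cite{BCM1}. First I would establish the existence of a weak holomorphic vector field $G$ generating the given evolution family; then I would extract the special Herglotz-type factorization of $G$ at each fixed time $t$.

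\medskip

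\textbf{Step 1: Constructing the infinitesimal generator.}
Fix $z\in\mathbb{D}$. By the local absolute continuity of evolution families in the variable $t$ (noted in the remark above), the map $t\mapsto\varphi_{s,t}(z)$ is differentiable for almost every $t$, and property (3) of an evolution family gives the $L^1$ majorization needed to control difference quotients. The semigroup property (2) is the key structural input: writing $\varphi_{s,t+h}=\varphi_{t,t+h}\circ\varphi_{s,t}$, one sees that the $t$-derivative of $\varphi_{s,t}(z)$ depends on $s$ only through the value $w=\varphi_{s,t}(z)$, which is exactly what lets us \emph{define} $G(w,t):=\frac{\partial}{\partial h}\big|_{h=0^+}\varphi_{t,t+h}(w)$. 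One then checks that $G$ has the three properties in Definition \ref{Definicion-VF}: holomorphy in $z$ for fixed $t$ follows from Weierstrass/Vitali applied to the holomorphic maps $\frac{1}{h}(\varphi_{t,t+h}-\mathrm{id})$; measurability in $t$ from the measurability built into property (3); and the local $L^1$ bound $|G(z,t)|\le k_{K,T}(t)$ is read off, after a Cauchy-estimate upgrade from pointwise to compact-set control, from the kernels $k_{z,T}$ in property (3). Finally, that $t\mapsto\varphi_{s,t}(z)$ solves the Cauchy problem in the weak (Carath\'eodory) sense is the fundamental theorem of calculus applied to the absolutely continuous trajectory, together with uniqueness of solutions for such vector fields.

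\medskip

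\textbf{Step 2: The Herglotz factorization.}
Fix $t$ for which $z\mapsto G(z,t)$ is holomorphic and nontrivial. Since each $\varphi_{t,t+h}$ is a holomorphic self-map of $\mathbb{D}$, the difference quotient $\frac{1}{h}(\varphi_{t,t+h}(z)-z)$ takes its values in a cone determined by the Schwarz--Pick inequality; passing to the limit, $G(\cdot,t)$ is a semicomplete (infinitesimal) generator of a one-parameter "instantaneous" flow on $\mathbb{D}$. The classical Berkson--Porta representation for such generators says exactly that $G(z,t)=(z-\tau(t))\,(\overline{\tau(t)}z-1)\,p(z,t)$ with $\tau(t)\in\overline{\mathbb{D}}$ and $\Re p(\cdot,t)\ge 0$. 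The remaining work is to check that $\tau$ and $p$ can be chosen \emph{measurably} in $t$ and that $p$ inherits the local $L^1$ domination so as to be a genuine generalized Herglotz function: one pins down $\tau(t)$ canonically (it is the Denjoy--Wolff-type point of $G(\cdot,t)$, recoverable by an explicit formula in terms of $G(0,t)$ and $G'(0,t)$ when $\tau(t)\in\mathbb{D}$, or as the appropriate boundary zero otherwise), and then $p(z,t)=G(z,t)/\big[(z-\tau(t))(\overline{\tau(t)}z-1)\big]$ is measurable in $t$ as a quotient of measurable functions, with the $L^1$ bound on $p$ following from the one on $G$ on compact subsets away from $\tau(t)$, extended to all of $\mathbb{D}$ by the Herglotz (positive-real-part) normalization which forces $|p(z,t)|\le \frac{1+|z|}{1-|z|}\,\Re p(0,t)$ up to normalization constants.

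\medskip

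I expect the main obstacle to be the \emph{measurability of the Berkson--Porta data} $\tau(t)$ and $p(\cdot,t)$: the factorization is pointwise-in-$t$ classical, but one must argue that the selection $t\mapsto\tau(t)$ is measurable (delicate precisely because $\tau(t)$ may jump between the interior and the boundary of $\mathbb{D}$ as $t$ varies) and that the resulting $p$ lies in the right class with the correct integrability. This is handled in \cite{BCM1}, and I would invoke their Theorems 4.8, 5.2 and 6.2 for the details rather than redo the measurable-selection argument from scratch.
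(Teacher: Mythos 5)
This theorem appears in the paper only as background and is not proved there: Section 2 states that all of its material is imported from \cite{BCM1} (see Theorems 4.8, 5.2 and 6.2 of that paper), and your outline follows exactly the route of that source --- differentiate $t\mapsto\varphi_{s,t}(z)$ in $t$ almost everywhere to produce the weak holomorphic vector field $G$, recognize each $G(\cdot,t)$ as an infinitesimal generator via limits of the generators $\frac{1}{h}(\varphi_{t,t+h}-\mathrm{id}_\D)$, and then apply the Berkson--Porta representation with measurable data $\tau(t)$ and $p(\cdot,t)$. Be aware, though, that your closing appeal to those same theorems of \cite{BCM1} for the measurable-selection step (which you correctly identify as the hardest point) makes your argument, like the paper's own treatment, ultimately a citation rather than a self-contained proof of that part.
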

In what follows, we will refer to any such
weak holomorphic vector field as an (the) associated vector field of the evolution family, since all of them are essentially the same (they are equal almost everywhere in $t$).

\begin{thm}
Given $\tau :[0,+\infty )\mathbb{\longrightarrow }\overline{\mathbb{D}}$ measurable
and $p:\mathbb{D}\times \lbrack 0,+\infty )\longrightarrow \mathbb{C}$
a generalized Herglotz function in $\mathbb{D}$, the solutions $\psi \lbrack
z,s]$ of the (weak) Cauchy problems $(z\in \mathbb{D}$ and $s\geq 0)$
\begin{equation*}
\left\{
\begin{array}{ll}
\dfrac{dz}{dt}=(z-\tau (t))(\overline{\tau (t)}z-1)p(z,t) & t\geq s \\
z(s)=z &
\end{array}%
\right.
\end{equation*}%
are always defined in the whole interval $[s,+\infty )$ and the biparametric
family
\begin{equation*}
\varphi _{s,t}(z):=\psi \lbrack z,s](t),\text{ }z\in \mathbb{D},\text{ }%
t\geq s
\end{equation*}%
is an evolution family in the unit disk.
\end{thm}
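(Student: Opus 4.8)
Throughout, set $G(z,t):=(z-\tau(t))(\overline{\tau(t)}z-1)p(z,t)$ for $z\in\D$ and $t\ge0$. First I would verify that $G$ is a weak holomorphic vector field on $\D$ in the sense of Definition \ref{Definicion-VF}. Measurability in $t$ and holomorphy in $z$ are inherited from $\tau$ and $p$. For the integrability condition, fix $r\in(0,1)$, set $K=\overline{D(0,r)}$ and $T>0$: on $\overline{\D}$ one has $|z-\tau(t)|\le2$ and $|\overline{\tau(t)}z-1|\le2$, whereas $\Re p(\cdot,t)\ge0$ yields, by the Harnack-type bound for holomorphic maps into the right half-plane, $|p(z,t)|\le C_r\,|p(0,t)|$ for $z\in K$, with $C_r$ depending only on $r$. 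Since $t\mapsto p(0,t)$ is locally integrable (this is built into the notion of a generalized Herglotz function, and is exactly the compatibility underlying the first theorem of this section), the choice $k_{K,T}(t):=4C_r\,|p(0,t)|$ works. Being holomorphic in $z$, $G$ is moreover, by the Cauchy integral formula, locally Lipschitz in $z$ with an $L^1_{\mathrm{loc}}$ Lipschitz constant; hence Carathéodory's ODE theory provides, for every $(z,s)\in\D\times[0,+\infty)$, a \emph{unique} non-continuable, locally absolutely continuous solution $\psi[z,s]$, defined on a maximal interval $[s,\omega_{z,s})$.

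The core of the argument is to show $\omega_{z,s}=+\infty$ for every $(z,s)$, together with the uniform confinement of the trajectories to compact subsets of $\D$ on finite time intervals. The point is that, for almost every $t$, the autonomous field $z\mapsto G(z,t)$ is precisely of Berkson--Porta form, hence is the infinitesimal generator of a one-parameter semigroup $(\phi^{t}_{u})_{u\ge0}$ of holomorphic self-maps of $\D$ (see the references of Section \ref{evolution families}). Fix $(z_0,s)$, write $\gamma:=\psi[z_0,s]$ on $[s,\omega)$, $\omega:=\omega_{z_0,s}$, and let $g(t):=\rho_{\D}(\gamma(t),0)$, which is locally absolutely continuous. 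At a.e.\ $t$ one has $\gamma(t+h)=\phi^{t}_{h}(\gamma(t))+o(h)$ as $h\to0^{+}$ (both sides agree to first order, the common first-order term being $G(\gamma(t),t)$); using the Schwarz--Pick contraction $\rho_{\D}(\phi^{t}_{h}(\gamma(t)),\phi^{t}_{h}(0))\le\rho_{\D}(\gamma(t),0)$, the triangle inequality, and $\rho_{\D}(\phi^{t}_{h}(0),0)=2h|G(0,t)|+o(h)$, one gets $g(t+h)\le g(t)+2h|G(0,t)|+o(h)$, i.e.
\begin{equation*}
\frac{d}{dt}\,\rho_{\D}(\gamma(t),0)\le 2\,|G(0,t)|=2\,|\tau(t)|\,|p(0,t)|\le 2\,|p(0,t)|\qquad\text{for a.e. }t.
\end{equation*}
Integrating, $\rho_{\D}(\gamma(t),0)\le\rho_{\D}(z_0,0)+2\int_s^t|p(0,\xi)|\,d\xi$, which is finite for every finite $t$ and, for $0\le s\le t\le T$, is bounded by $\rho_{\D}(z_0,0)+2\int_0^T|p(0,\xi)|\,d\xi$. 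Hence $\gamma$ stays in a fixed compact subset $K(z_0,T)\subset\D$ on $[s,\min\{\omega,T\}]$; by the blow-up alternative for Carathéodory solutions this forces $\omega_{z_0,s}=+\infty$. I expect this step to be the main obstacle, since it is exactly here that the Berkson--Porta structure of $G(\cdot,t)$ and the sign condition $\Re p\ge0$ are indispensable; the estimate above is the clean way to package it.

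It remains to check that $\varphi_{s,t}(z):=\psi[z,s](t)$, now defined for all $0\le s\le t$, is an evolution family. Each $\varphi_{s,t}$ maps $\D$ into $\D$ by the previous paragraph and depends holomorphically on $z$ by holomorphic dependence of Carathéodory solutions on the initial value; axiom (1), $\varphi_{s,s}=\mathrm{id}_{\D}$, is immediate from $\psi[z,s](s)=z$. For axiom (2), fix $0\le s\le u\le t$: restricted to $[u,+\infty)$, the map $\psi[z,s]$ and the map $\psi[\varphi_{s,u}(z),u]$ solve the same Cauchy problem at time $u$ (with value $\varphi_{s,u}(z)$), so by uniqueness they coincide, giving $\varphi_{s,t}=\varphi_{u,t}\circ\varphi_{s,u}$. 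For axiom (3), given $z\in\D$ and $T>0$ let $K=K(z,T)$ be the compact set above and $k_{K,T}$ the dominating function of the first paragraph; then for $0\le s\le u\le t\le T$,
\begin{equation*}
|\varphi_{s,u}(z)-\varphi_{s,t}(z)|=\Big|\int_u^t G(\varphi_{s,\xi}(z),\xi)\,d\xi\Big|\le\int_u^t k_{K,T}(\xi)\,d\xi,
\end{equation*}
so $k_{z,T}:=k_{K(z,T),T}\in L^1([0,T])$ does the job, and all three axioms hold. This completes the proof.
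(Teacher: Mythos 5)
Your proposal is correct, but be aware that the paper contains no proof of this statement to compare against: Section 2 presents it as background quoted from \cite{BCM1} (cf.\ Theorems 4.8, 5.2 and 6.2 there), so yours is a self-contained reconstruction. Measured against the argument in \cite{BCM1}, the difference lies in how trajectories are confined to compact subsets of $\D$: there, semicompleteness of Herglotz vector fields rests on the algebraic structure of infinitesimal generators, essentially a flow-invariance inequality of the type $\Re\left(\overline{z}\,G(z,t)\right)\le |G(0,t)|\,(1-|z|^{2})$, which after integration keeps $1-|\gamma(t)|^{2}$ bounded below by $(1-|z_{0}|^{2})\exp\left(-2\int_{s}^{t}|G(0,\xi)|\,d\xi\right)$ on finite intervals; you instead freeze the field, invoke the Berkson--Porta generation theorem for each $G(\cdot,t)$, and use Schwarz--Pick on the frozen semigroup to get $\frac{d}{dt}\rho_{\mathbb{D}}(\gamma(t),0)\le 2|G(0,t)|\le 2|p(0,t)|$ a.e. The two estimates are essentially equivalent (your hyperbolic bound is the integrated form of the same inequality), and your version is legitimate because a Carath\'eodory solution satisfies $\gamma'(t)=G(\gamma(t),t)$ a.e., which is exactly what your $o(h)$ comparison with $\phi^{t}_{h}(\gamma(t))$ needs; what the algebraic route buys is that it avoids quoting the generation theorem and works directly with the ODE. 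Two points you should make explicit rather than assert: first, the local integrability of $t\mapsto p(0,t)$ is invisible in the abridged definition of generalized Herglotz function given in this paper, but it is part of the notion (of order $d\ge 1$) in \cite{BCM1}, and both the statement and your proof genuinely require it, so your reading is the intended one and worth a sentence; second, the holomorphy of $z\mapsto\varphi_{s,t}(z)$ deserves a word of justification (locally uniform confinement of solutions plus continuous dependence and Weierstrass/Vitali, or the standard holomorphic-dependence theorem for Carath\'eodory equations with holomorphic right-hand side), since the definition of evolution family demands holomorphic self-maps. With these two remarks added, the proof is complete and matches the intended content of the quoted theorem.
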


It is really straightforward to check (really to compute) that the above
two theorems still hold changing $\D$ by $\h$. In fact, the corresponding weak holomorphic vector fields associated with evolution families in the
right-half plane $(\phi _{s,t})$ are exactly the following ones:
\begin{equation*}
F(w,t):=f^{\prime }(f^{-1}(w))G(f^{-1}(w),t),\text{ }w\in
\mathbb{H},\text{ }t\geq 0,
\end{equation*}%
where $f$ is any biholomorphic function mapping $\mathbb{D}$ onto $\mathbb{H}$ and $G$ is a/the associated vector field with the evolution family in $\mathbb{D}$ defined as $\varphi _{s,t}:=f^{-1} \circ \phi _{s,t}\circ f$.

In this context, it is possible to define (in a intrinsic way) the notion of Loewner chains and to show an essentially one-to-one correspondence among this concept and the ones mentioned above of evolution families and Herglotz vector fields. Certainly, this new notion includes and extends the classical one (see \cite{Contreras-Diaz-Pavel} for further details).

\subsection{Evolution Families with Common Denjoy-Wolff Point}\label{sub DW-point}

\begin{defn}
An evolution family $(\varphi _{s,t})$ in $\mathbb{D}$ is said to have a
common Denjoy-Wolff point $\tau \in \overline{\mathbb{D}}$, if the
Denjoy-Wolff point of each $\varphi _{s,t},$ other than the identity, is $\tau
.$
\end{defn}

\begin{rem}
We note that the trivial evolution family\ (every $\varphi _{s,t}$ is the
identity) has common Denjoy-Wolff point $\tau ,$ for each $\tau \in
\overline{\mathbb{D}}$. Appart from this extreme case, the above point $\tau
$ is univocally determined.
\end{rem}

\begin{rem} By the inner case, we mean that the above point $\tau$ belongs to $\D$. Likewise, the boundary case indicates that $\tau$ belongs to $\partial\D$. For several reasons (angular concepts, proof techniques, ...), it is convenient to handle these two cases separately.
\end{rem}
\begin{rem} Evolution families associated with semigroups of analytic functions are, all of them, evolution families with a common Denjoy-Wolff point $\tau\in\overline{\D}$. Also the evolution families related to the Radial Loewner Equation are evolution families with common Denjoy-Wolff point, but now $\tau\in \D$ (usually $\tau=0$). A similar fact happens with the Chordal Loewner Equation and this time $\tau\in\partial\D$.
\end{rem}

In a similar way as in discrete iteration theory in the unit disk, the boundary case, that is, evolution
families $(\varphi_{s,t})$ having a common Denjoy-Wolff point $\tau \in \partial \mathbb{D}$
are better studied by means of the associated evolution families in the
right-half plane $\phi _{s,t}:=\sigma _{\tau }\circ \varphi _{s,t}\circ
\sigma _{\tau }^{-1}$ $(0\leq s\leq t),$ where $\sigma _{\tau }(z):=\dfrac{%
\tau +z}{\tau -z},$ $z\in \mathbb{D}$. That is, $\sigma _{\tau }$ is the
classical Cayley map associated with $\tau .$ We recall that $\sigma _{\tau
} $ is a biholomorphic map from $\mathbb{D}$ onto $\mathbb{H}$ which admits
a bicontinuous extension from $\overline{\mathbb{D}}$ onto $cl_{\infty }%
\mathbb{H}$, the closure of $\mathbb{H}$ in the Riemann sphere $\mathbb{C}%
_{\infty }.$

Now, the relationship between evolution families with a common Denjoy-Wolff point and weak holomorphic vector fields is as follows.

\begin{prop}\label{BPorta formula with DW-common}
Let $(\varphi _{s,t})$ a non-trivial evolution family in the unit disk with common
Denjoy-Wolff point $\tau \in \overline{\mathbb{D}}$. Then, there exists a generalized Herglotz function in the unit disk $p$ such that
\begin{equation*}
G(z,t)=(z-\tau )(\overline{\tau }z-1)p(z,t),\text{ }z\in \mathbb{D},\text{ }%
t\geq 0
\end{equation*}%
is an associated vector field of the family $(\varphi _{s,t}).$

Reciprocally, given $\tau\in\overline{\D}$ and $p$ a generalized Herglotz function in the unit disk, the corresponding function $G$ defined as above is the associated vector field of some evolution family in the unit disk with common
Denjoy-Wolff point $\tau \in \overline{\mathbb{D}}$.

\end{prop}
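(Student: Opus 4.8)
The plan is to treat the two implications separately, and to split the direct one according to whether $\tau\in\mathbb{D}$ or $\tau\in\partial\mathbb{D}$.

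\emph{Direct implication, inner case ($\tau\in\mathbb{D}$).} Since a non-identity holomorphic self-map of $\mathbb{D}$ with an interior fixed point has that point as its Denjoy-Wolff point, the common Denjoy-Wolff point hypothesis gives $\varphi_{s,t}(\tau)=\tau$ for all $0\le s\le t$. By the first of the two theorems above, $t\mapsto\varphi_{s,t}(\tau)$ is a weak solution of the Cauchy problem with vector field $G$, so $\tau=\varphi_{s,t}(\tau)=\tau+\int_s^t G(\tau,\xi)\,d\xi$, whence $G(\tau,t)=0$ for a.e.\ $t\ge s$, and, letting $s\to0$, for a.e.\ $t\ge0$. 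Now set $p(z,t):=G(z,t)/\big((z-\tau)(\overline{\tau}z-1)\big)$; since $|\tau|<1$ the only zero of the denominator in $\mathbb{D}$ is $z=\tau$, where the numerator vanishes, so $p(\cdot,t)$ is holomorphic in $\mathbb{D}$ and clearly measurable in $t$. To see $\Re p(\cdot,t)\ge0$, compare with the representation $G(z,t)=(z-\tau(t))(\overline{\tau(t)}z-1)q(z,t)$ provided by that same theorem, with $q$ a generalized Herglotz function. For a.e.\ $t$ either $q(\cdot,t)\equiv0$, in which case $p(\cdot,t)\equiv0$, or $q(\cdot,t)\not\equiv0$; in the latter case the minimum principle applied to the non-negative harmonic function $\Re q(\cdot,t)$ forces $\Re q(\cdot,t)>0$ (or $q(\cdot,t)$ a non-zero imaginary constant), so $q(\cdot,t)$ has no zeros in $\mathbb{D}$, and hence $G(\cdot,t)$ has no zero in $\mathbb{D}$ when $\tau(t)\in\partial\mathbb{D}$ and only $\tau(t)$ when $\tau(t)\in\mathbb{D}$; since $G(\tau,t)=0$ with $\tau\in\mathbb{D}$, necessarily $\tau(t)=\tau$ and $p(\cdot,t)=q(\cdot,t)$. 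Thus $\Re p(\cdot,t)\ge0$ for a.e.\ $t$ (and we may redefine $p(\cdot,t):=0$ on the remaining null set), giving the desired generalized Herglotz function.

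\emph{Direct implication, boundary case ($\tau\in\partial\mathbb{D}$).} Here $\varphi_{s,t}$ need not fix $\tau$, so we pass to $\mathbb{H}$ through the Cayley map: $\phi_{s,t}:=\sigma_\tau\circ\varphi_{s,t}\circ\sigma_\tau^{-1}$ is an evolution family in $\mathbb{H}$ and, because $\sigma_\tau(\tau)=\infty$, it has common Denjoy-Wolff point $\infty$. By the half-plane form of the Denjoy-Wolff theorem, a non-identity self-map of $\mathbb{H}$ has Denjoy-Wolff point $\infty$ if and only if $\Re\phi(w)\ge\Re w$ for all $w\in\mathbb{H}$; hence $\Re\phi_{s,t}(w)\ge\Re w$ for every $w$, and using $\phi_{s,t}=\phi_{u,t}\circ\phi_{s,u}$ one gets that $t\mapsto\Re\phi_{s,t}(w)$ is non-decreasing. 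Differentiating at $t=s^{+}$ in the integral form of the Cauchy problem in $\mathbb{H}$, whose associated vector field is $F(w,t)=\sigma_\tau'(\sigma_\tau^{-1}(w))\,G(\sigma_\tau^{-1}(w),t)$, and choosing the exceptional null set of $t$ to serve simultaneously for a countable dense set of points $w$ (then extending by continuity), one obtains $\Re F(\cdot,t)\ge0$ on $\mathbb{H}$ for a.e.\ $t$. Finally, using $(z-\tau)(\overline{\tau}z-1)=\overline{\tau}(z-\tau)^2$ and $\sigma_\tau'(z)=(\sigma_\tau(z)+1)^2/(2\tau)$ one checks the algebraic identity $G(z,t)=(z-\tau)(\overline{\tau}z-1)\,p(z,t)$ with $p(z,t):=\tfrac12 F(\sigma_\tau(z),t)$; since $\Re p(\cdot,t)=\tfrac12\Re F(\sigma_\tau(\cdot),t)\ge0$ for a.e.\ $t$, this $p$ is a generalized Herglotz function in $\mathbb{D}$, as required.

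\emph{Reciprocal implication.} Given $\tau\in\overline{\mathbb{D}}$ and a generalized Herglotz function $p$, put $G(z,t):=(z-\tau)(\overline{\tau}z-1)p(z,t)$; the second of the two theorems above, applied with the constant map $t\mapsto\tau$, yields an evolution family $(\varphi_{s,t})$ with associated vector field $G$. If $\tau\in\mathbb{D}$, then $G(\tau,t)=0$, so the constant curve $\tau$ solves every Cauchy problem started at $\tau$; hence $\varphi_{s,t}(\tau)=\tau$ for all $s\le t$, and each non-identity $\varphi_{s,t}$, having an interior fixed point, has Denjoy-Wolff point $\tau$. If $\tau\in\partial\mathbb{D}$, transfer to $\mathbb{H}$ as above: the associated vector field there is $F(w,t)=2\,p(\sigma_\tau^{-1}(w),t)$, so $\Re F(\cdot,t)\ge0$, whence $t\mapsto\Re\phi_{s,t}(w)$ is non-decreasing and $\Re\phi_{s,t}(w)\ge\Re\phi_{s,s}(w)=\Re w$ for all $w\in\mathbb{H}$; since moreover $\Re\phi_{s,t}(w)\ge\Re w>0$, a non-identity $\phi_{s,t}$ can have neither an interior fixed point nor a finite boundary Denjoy-Wolff point, so its Denjoy-Wolff point is $\infty$, i.e.\ that of $\varphi_{s,t}$ is $\sigma_\tau^{-1}(\infty)=\tau$.

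The main obstacle is the boundary case of the direct implication: one must correctly transfer the monotonicity of $t\mapsto\Re\phi_{s,t}(w)$ into the infinitesimal statement $\Re F(\cdot,t)\ge0$ for a.e.\ $t$ \emph{uniformly} in $w$ (requiring the usual care with Lebesgue points and a countable dense exhaustion), and one must correctly invoke the half-plane Denjoy-Wolff theorem to match the Denjoy-Wolff point $\infty$ with the pointwise condition $\Re\phi_{s,t}(w)\ge\Re w$. The remaining ingredients—the removable singularity in the inner case and the conjugation formulas for $\sigma_\tau$—are routine.
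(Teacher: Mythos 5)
Your argument is essentially correct, but there is nothing in the paper to compare it with: Proposition \ref{BPorta formula with DW-common} is stated in Section \ref{evolution families} as material contained in, or easily deduced from, \cite{BCM1} and \cite{Contreras-Diaz-Pavel}, and no proof is given there. What you have done is reconstruct such a deduction from the two general theorems quoted in that section, and the route you choose is a natural one. The inner case (the trajectory through $\tau$ is constant, hence $G(\tau,t)=0$ a.e.; a non-trivial generalized Herglotz function is zero-free, so comparison with the general factorization $G(z,t)=(z-\tau(t))(\overline{\tau(t)}z-1)q(z,t)$ forces $\tau(t)=\tau$ and $p(\cdot,t)=q(\cdot,t)$ a.e.) and both halves of the reciprocal are sound, modulo standard ingredients the paper itself uses elsewhere: uniqueness of weak solutions, and the half-plane Julia--Wolff--Carath\'eodory characterization ``Denjoy--Wolff point $\infty$ $\Longleftrightarrow$ $\Re\phi(w)\geq\Re w$ for all $w$'', which is exactly the fact invoked in Remark \ref{remark Step II} and in Step II of Theorem \ref{mainth boundary}. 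The algebra $(z-\tau)(\overline{\tau}z-1)=\overline{\tau}(z-\tau)^{2}$ and $F(w,t)=2p(\sigma_\tau^{-1}(w),t)$ checks out. The one place where you only gesture at the proof is also the crux of the boundary case of the direct implication: passing from $\Re\phi_{s,s+h}(w)\geq\Re w$ to $\Re F(w,s)\geq 0$ for a.e.\ $s$ requires showing that $\lim_{h\to 0^{+}}h^{-1}\bigl(\phi_{s,s+h}(w)-w\bigr)=F(w,s)$ for a.e.\ $s$ at fixed $w$. This does follow from the integral equation, by writing the difference quotient as $\frac1h\int_s^{s+h}F(w,\xi)\,d\xi$ plus a remainder controlled, via Cauchy estimates on the $L^{1}$ majorant of $F$ on a compact neighborhood of $w$, by $\sup_{\xi\in[s,s+h]}|\phi_{s,\xi}(w)-w|$, and then taking $s$ a common Lebesgue point; alternatively one can invoke \cite[Theorem 6.4]{BCM1}, which is precisely what the paper does to prove Theorem \ref{BP ae-everywhere}. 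With that step made explicit, and with the (harmless, and noted by you) redefinition of $p(\cdot,t)$ on the exceptional $t$-null set -- legitimate because associated vector fields are only determined up to a.e.\ equality in $t$ -- your proof is complete.
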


In the boundary case and in the right-half plane context, the above theorem takes a more simplified form. We note a generalized Herglotz function $P$ in the right half-plane simply means that $P\circ\sigma^{-1}$ is a generalized Herglotz function in $\D$.

\begin{prop}\label{Herglotz in H}
Let $(\phi _{s,t})$ be
a non-trivial evolution family in the right-half plane having $\infty$ as common Denjoy-Wolff point. Then, there exists a generalized Herglotz function $P$ in the right-half plane such that $P$ is an associated vector field of the
family $(\phi _{s,t}).$

Reciprocally, any generalized Herglotz function $P$ in $\h$ is the the associated vector field of some evolution family in the right-half plane with common
Denjoy-Wolff point $\infty$.
\end{prop}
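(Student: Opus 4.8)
The plan is to transfer everything to the disk via the Cayley map $\sigma$ and apply Proposition \ref{BPorta formula with DW-common} with $\tau=1$ (since $\sigma(1)=\infty$, and $1\in\partial\D$). First I would fix a biholomorphic $f:\D\to\h$; the natural choice is $f=\sigma^{-1}$, so that the Cayley map $\sigma_\tau$ with $\tau=1$ coincides with $\sigma$, and the associated vector field in $\h$ is given by the transfer formula $F(w,t)=f'(f^{-1}(w))\,G(f^{-1}(w),t)$ already recorded in the excerpt, where $G$ is an associated vector field of the evolution family $\varphi_{s,t}:=\sigma^{-1}\circ\phi_{s,t}\circ\sigma$ in $\D$. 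Since $(\phi_{s,t})$ has common Denjoy–Wolff point $\infty$, the transferred family $(\varphi_{s,t})$ has common Denjoy–Wolff point $\tau=\sigma^{-1}(\infty)=1\in\partial\D$, so Proposition \ref{BPorta formula with DW-common} applies and yields a generalized Herglotz function $p$ in $\D$ with
\begin{equation*}
G(z,t)=(z-1)(\overline{1}\,z-1)p(z,t)=(z-1)^2 p(z,t),\quad z\in\D,\ t\geq 0.
\end{equation*}

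Next I would carry out the routine computation of $F$ from $G$. With $z=f^{-1}(w)=\sigma^{-1}(w)=\dfrac{w-1}{w+1}$ one has $z-1=\dfrac{-2}{w+1}$ and $f'(z)=(\sigma^{-1})'(w)\big|$ expressed through $\dfrac{d}{dz}\sigma(z)=\dfrac{2}{(1-z)^2}$, so that $f'(f^{-1}(w))=\dfrac{2}{(1-z)^2}=\dfrac{(w+1)^2}{2}$. Substituting,
\begin{equation*}
F(w,t)=f'(f^{-1}(w))\,G(z,t)=\frac{(w+1)^2}{2}\cdot\frac{4}{(w+1)^2}\,p\!\left(\frac{w-1}{w+1},t\right)=2\,p(\sigma^{-1}(w),t).
\end{equation*}
Hence, setting $P(w,t):=2\,p(\sigma^{-1}(w),t)$, we get $P\circ\sigma(z,t)=2p(z,t)$, and since a positive multiple of a generalized Herglotz function in $\D$ is again one (the conditions are: measurable in $t$, holomorphic in $z$, $\Re\geq 0$ — all preserved under scaling by $2$), $P$ is by definition a generalized Herglotz function in $\h$, and it is an associated vector field of $(\phi_{s,t})$.

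For the converse, given a generalized Herglotz function $P$ in $\h$, I would set $p(z,t):=\tfrac12\,P(\sigma(z),t)$, which is a generalized Herglotz function in $\D$; by the reciprocal part of Proposition \ref{BPorta formula with DW-common} with $\tau=1$, the function $G(z,t)=(z-1)^2 p(z,t)$ is the associated vector field of some evolution family $(\varphi_{s,t})$ in $\D$ with common Denjoy–Wolff point $1$; then $\phi_{s,t}:=\sigma\circ\varphi_{s,t}\circ\sigma^{-1}$ is an evolution family in $\h$ whose associated vector field is, by the same transfer computation run in reverse, exactly $P$, and whose common Denjoy–Wolff point is $\sigma(1)=\infty$. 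The only genuinely delicate point is bookkeeping: one must make sure that "common Denjoy–Wolff point $\infty$ for $(\phi_{s,t})$" really corresponds, under conjugation by $\sigma$, to "common Denjoy–Wolff point $1$ for $(\varphi_{s,t})$" at the level of each individual non-identity map — this uses that $\sigma$ extends bicontinuously to $\overline\D\to cl_\infty\h$ and that conjugation by a Möbius map sends the Denjoy–Wolff point to its image; everything else is the elementary Cayley-transform algebra displayed above.
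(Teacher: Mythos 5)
Your proposal is correct and follows the same route the paper intends (the paper leaves this proposition to the cited references, but the expected argument is exactly your Cayley-transfer of Proposition \ref{BPorta formula with DW-common} with $\tau=1$, where the factor $(z-1)(\overline{1}z-1)$ cancels against $\sigma'$ up to the harmless positive constant $2$). Only note the slip in your first sentence: the transfer formula requires $f$ to map $\mathbb{D}$ onto $\mathbb{H}$, so the natural choice is $f=\sigma$, not $f=\sigma^{-1}$ --- your subsequent computation ($f^{-1}(w)=\frac{w-1}{w+1}$, $f'(z)=\frac{2}{(1-z)^2}$) already uses $f=\sigma$ correctly.
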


\section{Main Results}

We are now ready to give the main results of this paper. Bearing in mind the particularities of each case, and not only for clarity, we have divided our version of the Denjoy-Wolff Theorem in two parts: the inner case and the boundary case. These notions refer to the material exposed in subsection (\ref{sub DW-point}). We begin by dealing with the boundary case.

\begin{thm}\label{mainth boundary}
Let $(\varphi _{s,t})$ be a non-trivial evolution family in the unit disk with common
Denjoy-Wolff point $\tau \in \partial \mathbb{D}$. Then, one and only one of
the three mutually excluding situations can happen:

\begin{enumerate}
\item For every $s\geq 0,$
$$
\lim_{t\rightarrow +\infty }\varphi
_{s,t}=\tau
$$
 uniformly on compact subsets of $\mathbb{D}.$

\item For every $s\geq 0$, there exists a univalent self-map of the unit
disk $h_{s}$ such that
$$
\lim_{t\rightarrow +\infty }\varphi _{s,t}=h_{s}
$$ uniformly on compact subsets of $\mathbb{D}.$

\item For every $s\geq 0$ and for every $z\in \mathbb{D},$ the $\omega $%
-limit $\omega (s,z)$ of the trajectory%
\begin{equation*}
t\in \lbrack s,+\infty )\longmapsto \varphi _{s,t}(z)\in \mathbb{D}
\end{equation*}%
is a closed arc of the circumference defined by the boundary of a certain horocycle
$
\mathrm{Hor}(\mathbb{\tau },k(s,z))$ where
$0<k(s,z)\leq k_{\mathbb{D}}(\tau,z).
$
\newline
Moreover, this case holds if and only if one of the following three
mutually excluding subcases holds:

\begin{enumerate}
\item For every $s\geq 0$ and for every $z\in \mathbb{D},$ $\omega (s,z)$ is
exactly the whole circumference $\partial \mathrm{Hor}(\mathbb{\tau },k(s,z)).$

\item For every $s\geq 0$ and for every $z\in \mathbb{D},$
$\omega (s,z)$ is a proper closed arc of $\partial \mathrm{Hor}(\mathbb{\tau},k(s,z))$
and one of its extreme points is $\tau .$

\item For every $s\geq 0$ and for every $z\in \mathbb{D}$,
$\omega (s,z)$ is a proper closed arc of
$\partial \mathrm{Hor}(\mathbb{\tau},k(s,z))$, it is contained in $\D$, and all of those arcs have the same
associated hyperbolic angular extent.
\end{enumerate}
\end{enumerate}
\end{thm}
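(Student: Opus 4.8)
The plan is to conjugate everything to the right half-plane $\h$ via the Cayley map $\sigma_\tau$, so that $\tau$ becomes $\infty$ and, by Proposition \ref{Herglotz in H}, the associated vector field is a generalized Herglotz function $P(w,t)$ with $\Re P(\cdot,t)\ge 0$. Then I would study the trajectories $t\mapsto\phi_{s,t}(w)$ in $\h$ by tracking the real part, which is essentially a Busemann-type functional: since $\Re P\ge 0$, one sees that $t\mapsto\Re\phi_{s,t}(w)$ is nondecreasing (this is the half-plane manifestation of the fact that the horocycles $\mathrm{Hor}(\tau,k)$ are mapped by $\sigma_\tau$ to half-planes $\{\Re w > c\}$, and Schwarz–Pick forces the trajectory to move ``away from $\infty$'' in this coordinate). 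Consequently $\lim_{t\to\infty}\Re\phi_{s,t}(w)=:c(s,w)\in(-\infty,+\infty]$ exists, and the three top-level alternatives correspond to (1) $c(s,w)=+\infty$ for all $w$ (trajectory escapes to $\infty$, i.e.\ $\varphi_{s,t}\to\tau$), versus the case $c(s,w)<\infty$, which splits into (2) the trajectory actually converges to a point (so $\varphi_{s,t}\to h_s$), versus (3) the trajectory does not converge but its $\omega$-limit lies on the vertical line $\{\Re w=c(s,w)\}$, i.e.\ on $\partial\mathrm{Hor}(\tau,k(s,z))$ back in the disk. The bound $0<k(s,z)\le k_{\D}(\tau,z)$ is just the monotonicity of $\Re\phi_{s,t}$ translated back.

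For the dichotomy between (1), (2), (3) and its independence of $s$ and $z$, the key structural tool is the algebraic relation $\varphi_{s,t}=\varphi_{u,t}\circ\varphi_{s,u}$ together with univalence of the maps $\varphi_{u,t}$. First I would fix the behavior for one pair $(s_0,z_0)$; then, using that $\varphi_{s_0,t}=\varphi_{s,t}\circ\varphi_{s_0,s}$ and that $\varphi_{s_0,s}$ is a fixed univalent self-map, the tail of the trajectory from $(s_0,z_0)$ is a reparametrized image under $\varphi_{s,t}$ of the tail from $(s,\,\varphi_{s_0,s}(z_0))$; since $\varphi_{s,t}$ is injective and continuous up to the relevant boundary data, the qualitative type of the $\omega$-limit (single point / full horocycle / proper arc) is preserved, giving $s$-independence. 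For $z$-independence within a fixed $s$: two trajectories starting at $z_1,z_2$ stay at bounded hyperbolic distance (evolution families are hyperbolic contractions), so in the half-plane model $\Re\phi_{s,t}(w_1)-\Re\phi_{s,t}(w_2)$ stays bounded; if one escapes to $\infty$ so does the other, and if neither does, the limiting imaginary-part oscillation is comparable, forcing the same type. This is also where the ``same hyperbolic angular extent'' in subcase 3(c) comes from: the contraction property makes the oscillation of $\Im\phi_{s,t}$ asymptotically the same (up to the conformal factor $\sinh r$ / $k$) along all trajectories, and the horocyclic normalization is precisely designed to absorb that factor.

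The main obstacle I expect is ruling out mixed behavior and pinning down exactly which of 3(a), 3(b), 3(c) occurs --- in particular showing that if the $\omega$-limit is a proper arc not touching the boundary, then it is a genuine arc (not a point), that its extent is constant, and that the three subcases are mutually exclusive and exhaustive. This requires a careful analysis of the oscillation of $t\mapsto\Im\phi_{s,t}(w)$: one must show this quantity has a limit, or oscillates in a controlled (eventually monotone-on-pieces or ``winding'') fashion, which I would extract from the integral representation $\phi_{s,t}(w)-\phi_{s,u}(w)=\int_u^t P(\phi_{s,\xi}(w),\xi)\,d\xi$ combined with the a priori bound $\Re\int_u^\infty P(\phi_{s,\xi}(w),\xi)\,d\xi<\infty$ in cases (2)--(3); the Herglotz condition then controls $\Im P$ on the relevant region via a Harnack/Borel–Carathéodory type estimate, so that unbounded oscillation forces the trajectory onto the full vertical line (case 3(a)), bounded oscillation with a limit gives convergence (case 2 or the boundary-touching case 3(b)), and the intermediate ``accumulates on a proper arc'' behavior is case 3(c); the trichotomy and the constancy of the extent then follow by comparing trajectories as above. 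Finally I would transfer the whole picture back to $\D$ via $\sigma_\tau^{-1}$, noting that $\sigma_\tau$ carries vertical lines in $\h$ to boundaries of horocycles at $\tau$ and preserves hyperbolic length, so the $\omega$-limits and their hyperbolic angular extents correspond exactly.
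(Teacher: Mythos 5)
Your overall strategy (conjugate to $\h$ via $\sigma_\tau$, use $\Re P\ge 0$ to get monotonicity of $t\mapsto\Re\phi_{s,t}(w)$, classify by the behaviour of the imaginary part, and translate vertical lines back to horocycles) is indeed the paper's framework, but there is a genuine gap at the heart of case (3): you never produce the mechanism that forces \emph{all} trajectories, for all $z$ and all $s$, to have $\omega$-limits which are arcs of \emph{exactly} the same angular type and, in subcase (3.c), of exactly the same hyperbolic angular extent. Your argument rests on ``trajectories stay at bounded hyperbolic distance, hence their imaginary-part oscillations are comparable, forcing the same type.'' Comparability is far too weak: a bounded-distance perturbation could a priori turn a proper arc into a full circle, move an endpoint onto or off $\tau$, or change the extent by a bounded factor, whereas the theorem asserts an exact, $(s,z)$-independent trichotomy and exact equality of extents. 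The paper's key lemma (its Step II) is that when the real parts stay bounded, the \emph{normalized} family $\theta_{s,t}:=\phi_{s,t}-i\Im\phi_{s,t}(1)$ converges locally uniformly to a univalent $\theta_s$ (two subsequential limits have equal real parts, hence differ by a constant, which is killed by the normalization at $w=1$). Writing $\phi_{s,t}(w)=\theta_{s,t}(w)-\Re\phi_{s,t}(1)+\phi_{s,t}(1)$ then shows that the entire oscillation of every trajectory is carried by the single scalar function $t\mapsto\Im\phi_{s,t}(1)$, so $\omega(s,w)=\theta_s(w)\oplus iA_s$ with literally the same set $A_s$ for every $w$; the evolution identity $\phi_{0,t}=\phi_{s,t}\circ\phi_{0,s}$ then shows $A_s$ is a translate of a fixed set $A$, and the connectedness/closedness of $A$ (intermediate value theorem applied to $\Im\phi_{s,t}(1)$) gives the arc structure and the four cases. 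Your sketch of ``controlled oscillation via Harnack/Borel--Carath\'eodory on $\Im P$'' does not substitute for this normalization argument, and the constancy of $\Theta=\Re f_s(w)\,\ell_{\h}(\omega(s,w))$ in (3.c) drops out only because $A$ itself is independent of $s$ and $w$, not merely ``comparable.''

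A second, smaller but real, error is your top-level correspondence: you identify case (1) ($\varphi_{s,t}\to\tau$) with $\lim_t\Re\phi_{s,t}(w)=+\infty$. Convergence to $\tau$ only means $\phi_{s,t}(w)\to\infty$ in $\mathbb{C}_\infty$, which can perfectly well happen with bounded real part and $\Im\phi_{s,t}(w)\to\infty$ (the paper even remarks that case (1) can occur with $L=\lim\Re\lambda(t)$ finite). This is why the paper runs two separate dichotomies: Step I (does some/every trajectory tend to $\infty$, propagated to all $s,w$ by a normal-families plus $\Re p>0$ argument and Vitali) and Step II (do the real parts tend to $+\infty$), and only when Step I \emph{fails} does it invoke Steps III--IV to sort out (2), (3.a), (3.b), (3.c). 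In your scheme the boundary-touching configurations --- full circle through $\tau$ (3.a) and arc with endpoint $\tau$ (3.b), i.e.\ $\infty\in\omega(s,w)$ without convergence --- are not cleanly separated from case (1), so the mutual exclusivity and exhaustiveness of the five behaviours is not actually established. Also, the monotonicity of $\Re\phi_{s,t}(w)$ is not ``Schwarz--Pick'' per se; it comes from the integral representation with $\Re F\ge0$ (or from Julia's lemma applied to $\phi_{u,t}$ at its Denjoy--Wolff point $\infty$), a small attribution slip but worth fixing.
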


\begin{proof}
First of all, we move to the right-half plane context. That is (see section \ref{evolution families}), we consider $%
(\phi _{s,t})$ the evolution family in the right-half plane associated with
$(\varphi _{s,t}).$ We know that the Denjoy-Wolff point of each $\phi
_{s,t}$ different from the identity is $\infty \in \partial _{\infty }%
\mathbb{H}$ and an associated vector field with $(\phi
_{s,t}) $ is given by some generalized Herglotz function in the right-half
plane $F:\mathbb{H}\times \lbrack 0,+\infty )\rightarrow \mathbb{C}.$

We begin by proving several steps of independent interest and, at the end,
using these steps, we will present properly the proof of the theorem.

\smallskip

\noindent [Step I] The following are equivalent:

$(i)$ For any $w\in \mathbb{H}$ and for any $s\geq 0,$ $\lim_{t\rightarrow
+\infty }\phi _{s,t}(w)=\infty .$

$(ii)$ There exist $w_{0}\in \mathbb{H}$ and $s_{0}\geq 0$ such that $%
\lim_{t\rightarrow +\infty }\phi _{s_{0},t}(w_{0})=\infty .$

Moreover, when item $(i)$ holds, the family $(\phi _{s,t})_t$ converges to $\infty$ uniformly on compact subsets of $\h$, for every $s\geq 0$.

\smallskip

\noindent [Proof of Step I] Assume $(ii)$ and take $w_{1}\in \mathbb{H}$ different
from $w_{0}.$ Consider (and fix) a certain strictly increasing sequence $%
(t_{n})$ of real numbers in $[s_{0},+\infty )$ converging to \thinspace $%
+\infty .$ Since, by Montel's Theorem, $(\phi _{s_{0},t_{n}})_{n}$ is a
normal family, we can find a subsequence $(t_{n_{k}})$ and $p\in cl_{\infty }%
\mathbb{H}$ such that $\lim_{k\rightarrow +\infty }\phi
_{s_{0},t_{n_{k}}}(w_{1})=p.$ Moreover, because of holomorphy, we find that,
for all $k,$
\begin{equation*}
\rho _{\mathbb{H}}^{{}}(\phi _{s_{0},t_{n_{k}}}(w_{1}),\phi
_{s_{0},t_{n_{k}}}(w_{0}))\leq \rho _{\mathbb{H}}^{{}}(w_{1},w_{0}).
\end{equation*}%
By hypothesis, $\lim_{t\rightarrow +\infty }\phi _{s_{0},t}(w_{0})=\infty $
so $p\in \partial _{\infty }\mathbb{H}.$ Assume for a moment that $p\neq
\infty .$ Then,
\begin{eqnarray*}
\Re p &=&\lim_{k\rightarrow +\infty }\Re\phi
_{s_{0},t_{n_{k}}}(w_{1})=\lim_{k\rightarrow +\infty }\Re\left[
w_{1}+\int_{s}^{t_{n_{k}}}F(\phi _{s_{0},\xi }(w_{1}),\xi )d\xi \right] \\
&=&\Re w_{1}+\lim_{k\rightarrow +\infty }\int_{s}^{t_{n_{k}}}\Re%
F(\phi _{s_{0},\xi }(w_{1}),\xi )d\xi \geq \Re w_{1}>0.
\end{eqnarray*}%
Therefore, $p$ is not purely imaginary and we obtain a contradiction. Bearing in
mind that the sequence $(t_{n})$ was chosen arbitrarily, we deduce that $%
\lim_{t\rightarrow +\infty }\phi _{s_{0},t}(w)=\infty ,$ for all $w\in
\mathbb{H}.$

Now, fix $s\geq 0$ different from $s_{0}.$ If $s<s_{0},$ then (considering
$t~$large enough), for any $w\in \mathbb{H},$
\begin{equation*}
\lim_{t\rightarrow +\infty }\phi _{s,t}(w)=\lim_{t\rightarrow +\infty }\phi
_{s_{0},t}(\phi _{s,s_{0}}(w))=\infty .
\end{equation*}%
Likewise, if $s>s_{0},$ then, for any $w\in \mathbb{H},$%
\begin{equation*}
\infty =\lim_{t\rightarrow +\infty }\phi _{s_{0},t}(w)=\lim_{t\rightarrow
+\infty }\phi _{s,t}(\phi _{s_{0},s}(w)).
\end{equation*}%
That is, we have found that $(\phi _{s,t})_{t}$ tends pointwise to $\infty $
on the set $\phi _{s_{0},s}(\mathbb{H})$. Since $\phi _{s_{0},s}$ is
univalent, we have that $\phi _{s_{0},s}(\h)$ is an open set of $\mathbb{H}$ so, by Vitali's
Theorem, we conclude that $\lim_{t\rightarrow +\infty }\dfrac{1}{\phi
_{s,t}(w)}=0,$ for all $w\in \mathbb{H}$ and, therefore, $\lim_{t\rightarrow
+\infty }\phi _{s,t}(w)=\infty ,$ for all $w\in \mathbb{H}.$

Finally, applying again Montel's Theorem, we deduce that
$\lim_{t\rightarrow
+\infty }\phi _{s,t}=\infty $ uniformly on compact subsets of $\h$, for every $s\geq 0$.

\smallskip

\noindent [Step II] The following dichotomy holds:

$(i)$ Either, for every $w\in \mathbb{H}$ and every $s\geq 0,$
\begin{equation*}
\lim_{t\rightarrow +\infty }\Re\phi _{s,t}(w)=+\infty ,
\end{equation*}

$(ii)$ or, for every $s\geq 0,$ there exists a holomorphic map $\theta
_{s}\in \mathrm{Hol}(\mathbb{H};\mathbb{H})$ such that
\begin{equation*}
\phi _{s,t}-i\Im\phi _{s,t}(1)\overset{t\rightarrow +\infty }{%
\longrightarrow }\theta _{s}
\end{equation*}%
uniformly on compact subsets of $\mathbb{H}$. In particular, for every $s\geq 0,$ the map
\begin{equation*}
w\in \mathbb{H}\mapsto \lim_{t\rightarrow +\infty }\Re\phi
_{s,t}(w)\in \mathbb{R}
\end{equation*}%
is a well-defined harmonic function in the right half-plane.

\smallskip

\noindent [Proof of Step II] Assume that $(i)$ fails. This means that there exist $
s_{0}\geq 0$, $w_{0}\in \mathbb{H}$ and a sequence $(t_{n})\subset \lbrack
s_{0},+\infty )$ converging to $+\infty $ such that
\begin{equation*}
\sup \{\Re\phi _{s_{0},t_{n}}(w_{0}):n\in \mathbb{N}\}<+\infty \text{.}
\end{equation*}%
However, for every $s\geq 0$ and for every $w\in \mathbb{H}$, the map $t\in
\lbrack s,+\infty )\rightarrow \Re\phi _{s,t}(w)\in~\mathbb{R}$ is non-decreasing. Indeed, given any $0\leq s\leq u\leq t,$
\begin{eqnarray*}
\Re\phi _{s,u}(w) &=&\Re\left[ w+\int_{s}^{u}F(\phi _{s,\xi
}(w),\xi )d\xi \right] \\
&\leq &\Re w+\int_{s}^{t}\Re F(\phi _{s,\xi }(w),\xi )d\xi =\Re\phi _{s,t}(w).
\end{eqnarray*}%
Therefore, we can conclude that, in fact, $\sup \{\Re\phi
_{s_{0},t}(w_{0}):t\geq s_{0}\}<+\infty $. Now, by the right-half plane
version of Harnack's inequality, we see that, $\text{for every }w\in \mathbb{H}$,
\begin{equation*}
\Re\phi _{s_{0},t}(1)\frac{\Re w}{|w|^{2}+1}\leq \Re\phi
_{s_{0},t}(w)\leq \Re\phi _{s_{0},t}(1)\frac{|w|^{2}+1}{\Re w}.
\end{equation*}%
Hence, $\sup \{\Re\phi _{s_{0},t}(w):t\geq s_{0}\}<+\infty ,$ for
every $w\in \mathbb{H}$. Moreover, bearing in mind that $\phi _{s,t}=\phi _{s_{0},t}\circ
\phi _{s,s_{0}},$ whenever $0\leq s\leq s_{0}\leq t,$ we deduce that,$\text{ for every }w\in
\mathbb{H}\text{ and every }s\in \lbrack 0,s_{0}],$
\begin{equation*}
\sup \{\Re\phi _{s,t}(w):t\geq s\}<+\infty.
\end{equation*}
Consider now some $s\geq s_{0}.$ Arguing as before, we can find $w_{1}\in
\mathbb{H}$ (take, for instance, $w_{1}:=\phi_{s_0,s}(1)$) such that $\sup \{\Re\phi _{s,t}(w_{1}):t\geq s\}<+\infty
. $ Now, applying again Harnack's inequality, we also obtain that
$\sup \{\Re\phi _{s,t}(w):t\geq s\}<+\infty ,$ for every $w\in \mathbb{H%
}$.

These previous results allow us to assert, for every $w\in \mathbb{H}$ and
every $s\geq 0,$ the existence of the limit
\begin{equation*}
(\ast )\text{ }H(w,s):=\lim_{t\rightarrow +\infty }\Re\phi
_{s,t}(w)=\sup\{\Re\phi _{s,t}(w):t\geq s\}\in (0,+\infty ).
\end{equation*}

Now, fix $s\geq 0$ and denote $\theta _{s,t}(w):=\phi _{s,t}(w)-i\Im%
\phi _{s,t}(1),$ where $w\in \mathbb{H}$ and $t\geq s.$ Trivially, $\theta
_{s,t}\in \mathrm{Hol}(\mathbb{H};\mathbb{H})$ so, by Montel's Theorem,
$\{\theta _{s,t}:t\geq s\}$ is a normal family in $\mathbb{H}$. Moreover,
since, for every $t\geq s$,
\begin{equation*}
|\theta _{s,t}(1)|=|\Re\phi _{s,t}(1)|\leq H(1,s)<+\infty,
\end{equation*}
we conclude that $\{\theta _{s,t}:t\geq s\}$ is indeed a relatively
compact subset of $\mathrm{Hol}(\mathbb{H};\mathbb{C}).$ Consider two
arbitrary accumulation points $h_{1}$ and $h_{2}$ of that set when tending $%
t $ to infinite. Of course, this means that $h_{1},h_{2}\in $ $\mathrm{Hol}(%
\mathbb{H};\mathbb{C})$ and there exist two increasing sequences $(t_{n})$
and $(u_{n})$ of real numbers belonging to $[s,+\infty )$ and converging to +%
$\infty $ such that
\begin{equation*}
h_{1}=\lim_{n\rightarrow +\infty }\theta _{s,t_{n}}\text{ and }%
h_{2}=\lim_{n\rightarrow +\infty }\theta _{s,u_{n}}
\end{equation*}
uniformly on compact subsets of $\mathbb{H}$. Bearing in mind $(\ast ),$ we
note that, for every $w\in \mathbb{H},$%
\begin{equation*}
\Re h_{1}(w)=\lim_{t\rightarrow +\infty }\Re\theta
_{s,t_{n}}(w)=H(w,s)=\lim_{t\rightarrow +\infty }\Re\theta
_{s,u_{n}}(w)=\Re h_{2}(w).
\end{equation*}%
In other words, $h_{1}$ and $h_{2}$ are two holomorphic maps with the same
real part. Necessarily, there exists $c\in \mathbb{C}$, such that,
\begin{equation*}
h_{1}(w)=h_{2}(w)+c,\text{ for all }w\in \mathbb{H}\text{.}
\end{equation*}%
However, using again $(\ast ),$%
\begin{eqnarray*}
c &=&h_{1}(1)-h_{2}(1)=\lim_{n\rightarrow +\infty }(\theta
_{s,t_{n}}(1)-\theta _{s,u_{n}}(1))=\lim_{t\rightarrow +\infty }(\Re%
\theta _{s,t_{n}}(1)-\Re\theta _{s,u_{n}}(1)) \\
&=&H(1,s)-H(1,s)=0.
\end{eqnarray*}%
Therefore, $h_{1}=h_{2}.$ Since those accumulation points were chosen
arbitrarily, we can consider
\begin{equation*}
\theta _{s}:=\lim_{t\rightarrow +\infty }(\phi _{s,t}-i\Im\phi
_{s,t}(1))\in \mathrm{Hol}(\mathbb{H};\mathbb{C}).
\end{equation*}
Certainly, every $\phi _{s,t}$ is univalent (see section \ref{evolution families}) so, by Hurwitz`s Theorem,
either $\theta _{s}$ is univalent or is a constant. We also note that $
\Re\theta _{s}\geq 0.$ Now, in case $\theta _{s}$ is univalent,
applying the Open Mapping Theorem, we deduce that $\theta _{s}\in \mathrm{Hol%
}(\mathbb{H};\mathbb{H}).$ On the other hand, if $\theta _{s}=a+ib$ for some
$a+ib\in \mathbb{C},$ and thinking off the Julia-Caratheodory Theorem, we find
that, for each $w\in \mathbb{H}$,
\begin{equation*}
\Re a=\Re\theta _{s}(w)=\lim_{t\rightarrow +\infty}
\Re\phi _{s,t}(w)\geq \Re w.
\end{equation*}%
Therefore, $\Re a\geq \Re w,$ for every $w\in \mathbb{H}$, and we
obtain a contradiction. In other words, this second case is impossible.

\smallskip

\noindent [Step III]: Assume that [Step I fails]. Then, there exists a non-empty subset
$A\subset \mathbb{R}\cup \{\infty \}$ (only depending on the whole evolution
family but neither on $s$ nor on $w)$ such that, for all $w\in \mathbb{H}$ and
for all $s\geq 0,$
\begin{equation*}
\omega (s,w)=f_{s}(w)\oplus iA:=\{f_{s}(w)+ia:a\in A\},
\end{equation*}
where $f_{s}$ is a certain univalent function
belonging to $\mathrm{Hol}(\mathbb{H};\mathbb{H})$ and $\omega (s,w)$ is the
$\omega $-limit of the trajectory
\begin{equation*}
t\in \lbrack s,+\infty )\mapsto \phi _{s,t}(w)\in \mathbb{H}
\end{equation*}
(for any $p\in \mathbb{C}$, the sum $^{\prime \prime}p+i\infty^{\prime \prime}$ must be
understood as $\infty $).\par
Moreover, $\Re f_{s}(w)\geq \Re w,$ for all $w\in \mathbb{H}$ and for all $s\geq 0.$ In addition, $A$ is either a real
number, a compact interval, a proper unbounded closed interval union with $\infty $
or is the whole $\mathbb{R}$ union with $\infty .$

\smallskip

\noindent [Proof of Step III] Fix $s\geq 0.$ Let us analyze briefly $\omega (s,1)$,
the $\omega $-limit of the trajectory
\begin{equation*}
t\in \lbrack s,+\infty )\mapsto \phi _{s,t}(1)\in \mathbb{H}.
\end{equation*}%
Since $\mathbb{C}_{\infty }$ is a compact metric space, it is well-known $%
\omega (s,1)$ is a non-void compact and connected subset of the Riemann
sphere $\mathbb{C}_{\infty }$. Moreover, since [Step I] fails and, by using
[Step II], the existence of the limit $\Re\theta
_{s}(1)=\lim_{t\rightarrow +\infty }\Re\phi _{s,t}(1)\in \mathbb{R}$
is always guaranteed. Therefore, bearing in mind the definition of the
notion of $\omega $-limit, we see that
\begin{equation*}
\omega (s,1)\setminus \{\infty \}=\Re\theta _{s}(1)\oplus iB_{s},
\end{equation*}%
where $B_{s}$ is the $\omega $-limit (in the real line) of the function $t\in \lbrack s,+\infty
)\mapsto \Im\phi _{s,t}(1)\in \mathbb{R}$, that is,
\begin{equation*}
B_{s}:=\{r\in \mathbb{R}:\text{ There exists }(t_{n})\subset \lbrack
s,+\infty )\text{ with }t_{n}\overset{n}{\longrightarrow }+\infty \text{ and
}\Im\phi _{s,t_{n}}(1)\overset{n}{\longrightarrow }r\}.
\end{equation*}%
The set $B_s$ is closed because $\R$ is a metric space. Moreover, by the Value Intermediate Theorem, we obtain that $B_{s}$ must be a point or
a closed interval of the real line. Therefore, $B_{s}$ must be a real number, a
closed finite interval, a closed unbounded interval different from $\R$ or the whole real line.
However, since $\omega (s,1)$ must be closed in the Riemann sphere, we
deduce that
\begin{equation*}
\omega (s,1)=\Re\theta _{s}(1)\oplus iA_{s}.
\end{equation*}%
where $A_{s}\subset \mathbb{R}\cup \{\infty \}$ is either a real number, a
compact interval, a proper unbounded closed interval union with $\infty $ or the
whole $\mathbb{R}$ union with $\infty .$

Now, take an arbitrary $w\in \mathbb{H}$ and consider the following
decomposition (see again [Step II] for the definition of $\theta _{s,t}$):%
\begin{equation*}
\phi _{s,t}(w)=\theta _{s,t}(w)-\Re\phi _{s,t}(1)+\phi _{s,t}(1),\text{
}t\geq s.
\end{equation*}%
Since the first two summands on the right side have limit when $t$ tends to $%
+\infty $, we deduce that
\begin{equation*}
\omega (s,w)=(\theta _{s}(w)-\Re\theta _{s}(1))\oplus \omega (s,1).
\end{equation*}%
From here, it is clear that $\omega (s,w)=\theta _{s}(w)+iA_{s}.$ Now, we
will show $A_{s}=h(s)\oplus A_{0}$ for a certain map $h:[0,+\infty
)\rightarrow \mathbb{R}$ \ (as above, if $p\in \mathbb{R}$, the sum $%
^{\prime \prime }p+\infty ^{\prime \prime }$ must be understood as $\infty ).$ So, fix $%
s\geq 0.$ By the own definition of evolution families, we know that%
\begin{equation*}
\phi _{0,t}(w)=\phi _{s,t}(\phi _{0,s}(w))\,,\text{ for all }w\in \mathbb{H}%
\text{ and for all }t\geq s.
\end{equation*}
From this identity, we immediately obtain that
\begin{equation*}
\theta _{0}(w)\oplus iA_{0}=\omega (0,w)=\omega (s,\phi _{0,s}(w))=\theta
_{s}(\phi _{0,s}(w))\oplus iA_{s}.
\end{equation*}%
In particular, since $\Im\theta _{0}(1)=0,$ we deduce that
\begin{equation*}
A_{0}=\Im\theta _{s}(\phi _{0,s}(1))\oplus A_{s}.
\end{equation*}%
Therefore, the function we are looking for is $h(s):=-\Im\theta
_{s}(\phi _{0,s}(1)),$ $s\geq 0.$ Finally, defining
\begin{equation*}
A:=A_{0}\text{ and }f_{s}(w):=\theta _{s}(w)+i\Im\theta _{s}(\phi
_{0,s}(1)),
\end{equation*}
we see that $\omega (s,w)$ $=f_{s}(w)\oplus iA,$ for all $w\in \mathbb{H}$
and for all $s\geq 0$ as wanted. Looking once more at [Step II], we trivially deduce $f_{s}$ is a univalent function belonging to $\mathrm{Hol}(\mathbb{H};\mathbb{H}).$ Moreover, for every $w\in \mathbb{H},$
\begin{eqnarray*}
\Re f_{s}(w) =\Re\theta _{s}(w)=\lim_{t\rightarrow +\infty }\Re\phi_{s,t}(w)\geq \Re w.
\end{eqnarray*}

\smallskip

\noindent [Step IV] Assume that [Step I] fails and let $f_{s}$ be the function defined
in [Step III]. Then one and only one of the four following cases holds:

$(IV.1)$ For all $s\geq 0,$ there exists a univalent function $h_{s}\in
\mathrm{Hol}(\mathbb{H};\mathbb{H})$ such that $\lim_{t\rightarrow +\infty
}\phi _{s,t}=h_{s}$ uniformly on compact subsets of $\mathbb{H}$.

$(IV.2)$ For all $s\geq 0$ and for all $w\in \mathbb{H},$ we have that
$$
\omega (s,w)=\{\infty \}\cup \{\Re f_{s}(w)+ia:a\in \mathbb{R}\}.
$$

$(IV.3)$ For all $s\geq 0$ and for all $w\in \mathbb{H}$, we have that $%
\omega (s,w)=f_{s}(w)\oplus iA,$ where $A$ is the union of $\infty $ and a
closed infinite interval of real numbers different from $\R$.

$(IV.4)$ For all $s\geq 0$ and for all $w\in \mathbb{H}$, we have that $%
\omega (s,w)=f_{s}(w)\oplus iA,$ where $A$ is a a closed finite interval of
real numbers. Moreover, the quantity
\begin{equation*}
\Theta :=\Re f_{s}(w)\ell _{\mathbb{H}}(\omega (s,w))\in (0,+\infty )
\end{equation*}%
does not depend neither on $s$ nor on $w.$

\smallskip

\noindent [Proof of Step IV] According to [Step III], there exists a non-empty subset $%
A\subset \mathbb{R}\cup \{\infty \}$ (only depending on the whole evolution
family) such that, for all $w\in \mathbb{H}$ and for all $s\geq 0,$%
\begin{equation*}
\omega (s,w)=f_{s}(w)\oplus iA.
\end{equation*}
Moreover, the set $A$ can be: $(a)$ a real number, $(b)$ the whole $\mathbb{R%
}$ union with $\infty $, $(c)$ a proper unbounded closed interval union with $\infty $, $(d)$ a compact interval. Let us analyze separately these four cases.

If case $(a)$ holds, then $A=\{a\}$ and, in particular, $\lim_{t\rightarrow
+\infty }\Im\phi _{s,t}(1)=a,$ for all $s\geq 0.$ Applying again [Step
II], we also obtain that, for every $s\geq 0,$ $\lim_{t\rightarrow +\infty
}\phi _{s,t}=\theta _{s}+ia$ uniformly on compact subsets of $\mathbb{H}$.
Of course, this case is in correspondence with $(IV.1).$ We leave the reader
to check that case $(b)$ is in correspondence with $(IV.2)$, case $(c)$ is
in correspondence with $(IV.3)$ and case $(d)$ is in correspondence with $%
(IV.4)$. The only point that really remains to prove is the assertion about $%
\Theta $ in $(IV.4).$ In this case, there exists two real numbers $\alpha
<\beta $ such that, for all $w\in \mathbb{H}$ and for all $s\geq 0,$%
\begin{equation*}
\omega (s,w)=\{f_{s}(w)+it:t\in \lbrack \alpha ,\beta ]\}.
\end{equation*}%
Let us calculate the hyperbolic length in $\mathbb{H}$ of this segment.
Namely,
\begin{equation*}
\ell _{\mathbb{H}}(\omega (s,w))=\int_{\omega (s,w)}\frac{|du|}{\Re u}%
=\int_{\alpha }^{\beta }\frac{dt}{\Re(f_{s}(w)+it)}=\frac{\beta
-\alpha }{\Re f_{s}(w)}.
\end{equation*}%
Clearly, the number $\Theta =\beta -\alpha $ does not depend neither on $s$ nor
on $w.$

\smallskip

[Proof of the Theorem] The theorem follows directly by translating [Step I]
and [Step IV] to the context of the unit disk by means of the Cayley map $%
\sigma _{\tau }$. At this respect, we recall that, for every $k>0,$
\begin{equation*}
\sigma _{\tau }\left( \mathrm{Hor}(\tau ,k)\right) =\{w\in \mathbb{H}: \Re w>\frac{1}{k}\}.
\end{equation*}%
We also recall that if $\gamma $ is an (absolutely continuous) curve in $\mathbb{D},$ then $\ell _{%
\mathbb{D}}(\gamma )=\ell _{\mathbb{H}}(\sigma _{\tau }\circ \gamma ).$
\end{proof}

\begin{rem}\label{remark Step II} Looking carefully at [Step II] of the above theorem, we see that the assertion $(ii)$ presented there is equivalent to the following fact: for every $w\in \mathbb{H}$ and every $s\geq 0,$ (resp. for some $w\in \mathbb{H}$ and some $s\geq 0)$
\begin{equation*}
\sup{\{\Re\phi _{s,t}(w):s\leq t}\}<+\infty.
\end{equation*}
\par
Moreover (maintaining the notations of the theorem) if, for some $w\in\h$, we know that $F(w,\cdot)$ belongs to $L^1([0,+\infty);\R)$, then statement $(ii)$ holds. We proceed arguing by contradiction. Namely, assume that $(i)$ holds and fix $w\in\h$. Now, since $F$ is a vector field associated with $(\phi_{s,t})$, some computations (see \cite[Theorem 7.1, Claim 2]{BCM1} for instance) show that, for every $t\geq 0$ and every $w\in\h$,
$$
\Re \phi_{0,t}(w)=\Re w\exp{\int_0^t \frac{\Re F(\phi_{0,\xi}(w),\xi)}{\Re \phi_{0,\xi}(w)}d\xi}.
$$
By hypothesis and bearing in mind the positivity property of $F$, we deduce that
$$
\lim_{t\to+\infty}\int_0^t \frac{\Re F(\phi_{0,\xi}(w),\xi)}{\Re \phi_{0,\xi}(w)}d\xi=+\infty.
$$
We know, by applying the Julia-Caratheodory Theorem, that $\Re \phi_{s,t}(w)\geq \Re w$. Hence, using \cite{Pomm79}, we have that, for every $\xi\geq 0$,
$$
\frac{\Re F(\phi_{0,\xi}(w),\xi)}{\Re \phi_{0,\xi}(w)}\leq \frac{\Re F(w,\xi)}{\Re w}.
$$
Therefore, the improper integral of the non-negative map $F(w,\cdot)$ in $[0,+\infty)$ is not (absolutely) convergent thus $F(w,\cdot)$ does not belong to $L^1([0,+\infty);\R)$.
\end{rem}

The five types of dynamical behavior mentioned in the above theorem are
indeed possible. In fact, they can be realized using what one might consider
the ``simplest'' cases of evolution families. Namely, the forthcoming examples
are evolution families in the unit disk formed by linear fractional maps and having
the point one as the common Denjoy-Wolff point. Moreover, they present a very
similar structure. That is, only little changes are needed to obtain all the
possible phenomena. The list goes as follows ($z\in \mathbb{D}$ and $0\leq s\leq t)$.

\begin{enumerate}
\item Example of convergence to the common Denjoy-Wolff point [Case (1)]:
\begin{equation*}
\varphi _{s,t}(z):=1+\frac{z-1}{1-(z-1)\left( -s+t+i(\sin (t)-\sin
(s)\right) }.
\end{equation*}

\item Example of convergence to a certain univalent function [Case (2)]:
\begin{equation*}
\varphi _{s,t}(z):=1+\frac{z-1}{1-(z-1)\left( e^{-s}-e^{-t}+i(\frac{1}{1+s}-
\frac{1}{1+t})\right) }.
\end{equation*}

\item Example where $\omega $-limits are complete circumferences [Case (3.a)]:
\begin{equation*}
\varphi _{s,t}(z):=1+\frac{z-1}{1-(z-1)\left( e^{-s}-e^{-t}+i(t\sin
(t)-s\sin (s))\right) }.
\end{equation*}

\item Example where $\omega $-limits are proper arcs having the common
Denjoy-Wolff point as one of its extremes [Case (3.b)]:
\begin{equation*}
\varphi _{s,t}(z):=1+\frac{z-1}{1-(z-1)\left( e^{-s}-e^{-t}+i(t^{2}\sin
(t)-s^{2}\sin (s))\right) }.
\end{equation*}

\item Example where $\omega $-limits are proper arcs inside the unit disk [Case (3.c)]:
\begin{equation*}
\varphi _{s,t}(z):=1+\frac{z-1}{1-(z-1)\left( e^{-s}-e^{-t}+i(\sin (t)-\sin
(s))\right) }.
\end{equation*}

\end{enumerate}

Now, we treat what we have called the inner case.

\begin{thm}\label{mainth inner}
Let $(\varphi _{s,t})$ be a non-trivial evolution family in the unit disk with common
Denjoy-Wolff point $\tau \in \mathbb{D}$. Then, one and only one of the
three mutually excluding situations can happen:

\begin{enumerate}
\item For every $s\geq 0,$ we have
$$
\lim_{t\longrightarrow +\infty }\varphi
_{s,t}=\tau
$$
uniformly on compact subsets of $\mathbb{D}$.

\item For every $s\geq 0,$ there exists a univalent holomorphic self-map of the unit disk
$h_{s}$ such that
$$
\lim_{t\longrightarrow +\infty }\varphi _{s,t}=h_{s}
$$
uniformly on compact subsets of $\mathbb{D}.$

\item For every $s\geq 0$ and
every $z\in \mathbb{D}\setminus\{\tau\},$ the $\omega $-limit $\omega (s,z)$ of the trajectory%
\begin{equation*}
t\in \lbrack s,+\infty )\mapsto \varphi _{s,t}(z)\in \mathbb{D}
\end{equation*}%
is a closed arc of the circumference defined by the boundary of a certain hyperbolic
disk $D_{H}(\tau ,r(s,z)),$ where $0<r(s,z)\leq\rho _{\mathbb{D}}(\tau,z)$.

Moreover, this case holds if and only if one of the following two
mutually excluding subcases holds:

\begin{enumerate}
\item For every $s\geq 0$ and for every $z\in \mathbb{D}\setminus\{\tau\}$, $\omega (s,z)$ is
exactly the whole circumference $\partial D_{H}(\tau ,r(s,z)).$

\item For every $s\geq 0$ and for every $z\in \mathbb{D}\setminus\{\tau\}$, $\omega (s,z)$ is
a proper closed arc of $\partial D_{H}(\tau ,r(s,z))$ and all of those arcs have the same
associated hyperbolic angular extent.
\end{enumerate}
\end{enumerate}
\end{thm}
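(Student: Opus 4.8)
The plan is to deduce Theorem \ref{mainth inner} from the already established Theorem \ref{mainth boundary} by lifting the given evolution family to the right half-plane through the universal covering $\pi\colon\h\to\D\setminus\{0\}$, $\pi(w):=e^{-w}$, under which the role of the inner point $\tau$ is taken over by $\infty$. First I would conjugate $(\varphi_{s,t})$ by an automorphism of $\D$ so that $\tau=0$; since a common inner Denjoy--Wolff point is a common fixed point and each $\varphi_{s,t}$ is univalent, every $\varphi_{s,t}$ restricts to a univalent self-map of $\D\setminus\{0\}$, and by Proposition \ref{BPorta formula with DW-common} an associated vector field is $G(z,t)=-z\,p(z,t)$ with $p$ a generalized Herglotz function in $\D$. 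Then $F(w,t):=p(e^{-w},t)$ is a generalized Herglotz function in $\h$ (holomorphy in $w$ and measurability in $t$ are immediate, and the local $L^{1}$-bound of Definition \ref{Definicion-VF} transfers from $G$ on compacta of $\D\setminus\{0\}$ to $F$ on compacta of $\h$), so by Proposition \ref{Herglotz in H} it is the associated vector field of a non-trivial evolution family $(\phi_{s,t})$ in $\h$ with common Denjoy--Wolff point $\infty$. The crucial point is the intertwining identity
\begin{equation*}
\varphi_{s,t}(e^{-w})=e^{-\phi_{s,t}(w)},\qquad w\in\h,\ 0\le s\le t,
\end{equation*}
which follows from uniqueness of solutions of the Cauchy problems attached to $G$, because $t\mapsto e^{-\phi_{s,t}(w)}$ solves $z'=-z\,p(z,t)$ with value $e^{-w}$ at $t=s$.

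Next I would apply Theorem \ref{mainth boundary} to $(\phi_{s,t})$ --- equivalently, invoke the half-plane classification recorded as [Step I] and [Step IV] in the proof of that theorem --- and transfer the outcome through $\pi$. If $\phi_{s,t}\to\infty$ locally uniformly in $\h$, then $|\varphi_{s,t}(e^{-w})|=e^{-\Re\phi_{s,t}(w)}\to 0$, so $\varphi_{s,t}\to 0=\tau$ on $\D\setminus\{0\}$, and since $\varphi_{s,t}(0)=0$, a normal-family (Vitali) argument upgrades this to locally uniform convergence on all of $\D$: this is case (1). If instead $\phi_{s,t}\to h_{s}$ with $h_{s}$ univalent in $\mathrm{Hol}(\h;\h)$, then $\Re h_{s}>0$, so $\varphi_{s,t}(z)\to e^{-h_{s}(-\log z)}$, a well-defined holomorphic, bounded and nowhere-zero function of $z\in\D\setminus\{0\}$ (independent of the branch of the logarithm); it extends holomorphically across $0$, and by Hurwitz's theorem the resulting self-map of $\D$ is univalent: this is case (2). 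In the remaining cases $(IV.2)$, $(IV.3)$, $(IV.4)$ one has $\omega(s,w)=f_{s}(w)\oplus iA$ in $cl_\infty\h$, with $f_{s}$ univalent, $\Re f_{s}(w)\ge\Re w$, and, by [Step II], $\Re\phi_{s,t}(w)$ bounded and convergent to $\Re f_{s}(w)$. Hence $|\varphi_{s,t}(e^{-w})|\to R:=e^{-\Re f_{s}(w)}\in(0,1)$ and, with $z=e^{-w}$, the $\omega$-limit of $t\mapsto\varphi_{s,t}(z)$ equals $\pi\big(f_{s}(w)\oplus i(A\cap\R)\big)=\{R\,e^{-i(\Im f_{s}(w)+a)}:a\in A\cap\R\}$; the symbol $\infty\in A$ contributes nothing new, precisely because $\Re\phi_{s,t}(w)$ stays bounded away from $0$. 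Thus $\omega(s,z)$ is a closed arc of the circle $C(0,R)=\partial D_{H}(\tau,r(s,z))$ with $r(s,z)=\log\frac{1+R}{1-R}$, and from $\Re f_{s}(w)\ge\Re w=-\log|z|$ we get $R\le|z|$, i.e. $0<r(s,z)\le\rho_{\D}(\tau,z)$.

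It remains to sort the two subcases. If $A$ is unbounded (cases $(IV.2)$ and $(IV.3)$), or if $A=[\alpha,\beta]$ with $\beta-\alpha\ge 2\pi$, the vertical interval $f_{s}(w)\oplus i(A\cap\R)$ wraps entirely around under $\pi$, so $\omega(s,z)=\partial D_{H}(\tau,r(s,z))$ for all $s\ge 0$ and all $z\in\D\setminus\{\tau\}$: this is subcase (3.a). If $A=[\alpha,\beta]$ with $\Theta:=\beta-\alpha<2\pi$ --- where $\Theta$ is independent of $s$ and $w$ by $(IV.4)$ --- then $\omega(s,z)$ is a proper closed arc of $C(0,R)$ of Euclidean angular extent $\Theta$; since the center is $\tau=0$, this coincides with the hyperbolic angular extent, which is therefore the same for all $s$ and $z$: this is subcase (3.b). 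That exactly one of (1), (2), (3) occurs, and within (3) exactly one of (3.a), (3.b), together with the ``if and only if'' clause, is inherited from the corresponding assertions of Theorem \ref{mainth boundary}. Finally, undoing the initial conjugation restores the original inner point and, being a hyperbolic isometry, preserves hyperbolic disks, lengths and angular extents, so all statements persist. The main obstacle is precisely this transfer through the covering map: realizing that $\tau$ need not belong to $\omega(s,z)$ even though $\infty\in A$, and that the three half-plane possibilities $(IV.2)$, $(IV.3)$ and the $\Theta\ge 2\pi$ part of $(IV.4)$ all collapse into the single ``whole circumference'' subcase (3.a); once the intertwining identity is in hand, everything else is bookkeeping.
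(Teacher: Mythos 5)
Your overall strategy is exactly the paper's: conjugate so that $\tau=0$, lift through the covering $w\mapsto e^{-w}$ to an evolution family $(\phi_{s,t})$ in $\h$ with Denjoy--Wolff point $\infty$ via the intertwining identity $\varphi_{s,t}(e^{-w})=e^{-\phi_{s,t}(w)}$ (obtained, as in the paper, from uniqueness of solutions of the Cauchy problem for $-zp(z,t)$), then transfer the half-plane classification of Theorem \ref{mainth boundary} back through the exponential and undo the conjugation by the isometry $\alpha_\tau$. The treatment of the cases where [Step I] fails --- the identification $\omega(s,z)=\pi\bigl(f_s(w)\oplus i(A\cap\R)\bigr)$, the collapse of $(IV.2)$, $(IV.3)$ and the part of $(IV.4)$ with $\Theta\ge 2\pi$ into the whole-circumference subcase, the bound $R=e^{-\Re f_s(w)}\le |z|$, and the constancy of the angular extent when $\Theta<2\pi$ --- matches the paper's [Step II] of the inner proof.

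There is, however, a genuine error in your first case. You claim that if $\phi_{s,t}\to\infty$ locally uniformly in $\h$ then $|\varphi_{s,t}(e^{-w})|=e^{-\Re\phi_{s,t}(w)}\to 0$, i.e.\ that convergence to $\infty$ forces $\Re\phi_{s,t}(w)\to+\infty$. This is false: since $\Re\phi_{s,t}(w)$ is only non-decreasing in $t$, the orbit can escape to $\infty$ with bounded real part and $|\Im\phi_{s,t}(w)|\to\infty$. A concrete instance is $\phi_{s,t}(w)=w+i(t-s)$ (vector field $F\equiv i$, a legitimate Herglotz function), which corresponds to the rotation family $\psi_{s,t}(z)=e^{-i(t-s)}z$; here $\phi_{s,t}\to\infty$ but $\varphi_{s,t}$ does not tend to $\tau$, and the $\omega$-limits are full circumferences, i.e.\ subcase (3.a). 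Moreover this missing situation is not caught later in your argument, because your ``remaining cases'' $(IV.2)$--$(IV.4)$ only apply when [Step I] fails. The paper avoids the problem by first invoking the real-part dichotomy ([Step II] of the boundary proof): either $\Re\phi_{s,t}(w)\to+\infty$ for all $s,w$, which is the genuine case (1), or the real parts converge; only in the second alternative does it then ask whether [Step I] holds, the affirmative answer giving $|\Im\phi_{s,t}(w)|\to\infty$ with convergent modulus $e^{-\Re\phi_{s,t}(w)}$ and hence the whole circle $C(0,e^{-\beta_s(w)})$. Your proof can be repaired by inserting exactly this dichotomy before your case split; as written, the classification misassigns a whole class of families (including the elliptic-rotation semigroups) to case (1).
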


\begin{proof}
First of all, we conjugate the evolution family to move the point $\tau$ to zero. That is, we
define
\begin{equation*}
\psi _{s,t}(w):=\alpha _{\tau }\circ \varphi _{s,t}\circ \alpha _{\tau }(z),%
\text{ }z\in \mathbb{D}\text{, }0\leq s\leq t,
\end{equation*}%
where $\alpha _{\tau }$ is the canonical map associated
with $\tau $, that is, $\alpha _{\tau }(z)=\dfrac{\tau -z}{1-\overline{\tau }%
z},$ $z\in \mathbb{D}.$ It is easy to see that $(\psi _{s,t})$ is also an
evolution family in the unit disk having zero as the common Denjoy-Wolff point.
Therefore, by Theorem \ref{BPorta formula with DW-common}, we know that an associated vector field is given by $F(z,t):=-zp(z,t),$ $z\in \mathbb{D},$ $t\geq 0,$
where $p:\mathbb{D}\times \lbrack 0,+\infty )\rightarrow \mathbb{C}$ is a
certain generalized Herglotz function in the unit disk.

We begin by proving two steps of independent interest and, at the end, we
will properly present the proof of the theorem.

\smallskip

\noindent [Step I] There exists an evolution family $(\phi _{s,t})$ in $\mathbb{H}$
with common Denjoy-Wolff point $\infty$ such that
\begin{equation*}
\begin{array}{l}
(i)\text{ }\psi _{s,t}(e^{-w})=\exp \left(-\phi _{s,t}(w)\right) ,\text{ }%
0\leq s\leq t,\text{ }w\in \mathbb{H}\text{.} \\
(ii)\text{ }\phi _{s,t}(w+2\pi i)=\phi _{s,t}(w)+2\pi i,\text{ }w\in \mathbb{H}\text{.} \\
(iii)\text{ For every }0\leq s\leq t,\text{ the function }\phi _{s,t}\text{
is parabolic and satisfies that }\\
\qquad\qquad\qquad\lim_{w\rightarrow \infty }(\phi _{s,t}(w)-w)=-\mathrm{log}(\varphi^\prime_{s,t}(\tau))\\
\text{in the angular sense.}
\end{array}
\end{equation*}

\smallskip

\noindent [Proof of Step I] Since $p$ is a generalized Herglotz function in the unit
disk, we see that
\begin{equation*}
q:\mathbb{H}\times \lbrack 0,+\infty )\rightarrow \mathbb{C}\quad
(w,t)\mapsto q(w,t):=p(e^{-w},t)
\end{equation*}%
is also a generalized Herglotz function but now in the right-half plane. Moreover,
by Theorem \ref{Herglotz in H}, $q$ is an associated vector field of a
certain (indeed unique) evolution family in $\mathbb{H}$ with common
Denjoy-Wolff point $\infty$. Let us denote
such family as $(\phi_{s,t}).$ Then, if we fix $s\geq 0$
and $w\in \mathbb{H},$ the mapping $t\in \lbrack s,+\infty )\rightarrow \phi
_{s,t}(w)\in\h$ is differentiable almost everywhere and
\begin{equation*}
\frac{\partial}{\partial t}\phi_{s,t}(w)=q(\phi_{s,t}(w),t),\quad a.e.
\text{ in }t\in \lbrack s,+\infty ).
\end{equation*}
Moreover, also $a.e.$ in $t\in \lbrack s,+\infty )$, we have that
\begin{eqnarray*}
\frac{\partial}{\partial t}\left( e^{-\phi _{s,t}(w)}\right) &=&
-e^{-\phi_{s,t}(w)}\frac{\partial}{\partial t}\phi _{s,t}(w)=-e^{-\phi
_{s,t}(w)}q(\phi _{s,t}(w),t) \\
&=&-e^{-\phi _{s,t}(w)}p(e^{-\phi _{s,t}(w)},t).
\end{eqnarray*}
In other words, the map $t\in \lbrack s,+\infty )\rightarrow e^{-\phi
_{s,t}(w)}$ is the solution (in the weak sense) of the Cauchy problem
\begin{equation*}
\left\{
\begin{array}{ll}
\dot{u}=-up(u,t), & t\geq s \\
u(s)=e^{-w}. &
\end{array}%
\right.
\end{equation*}%
Hence, by the theorem of uniqueness of solutions, we deduce that
\begin{equation*}
\psi _{s,t}(e^{-w})=e^{-\phi _{s,t}(w)}.
\end{equation*}%
This proves statement $(i).$

Now, using $(i)$, for every $0\leq s\leq t$ and every $w\in \mathbb{H}$,
\begin{equation*}
e^{-\phi _{s,t}(w+2\pi i)}=\psi _{s,t}(e^{-w-2\pi i})=\psi
_{s,t}(e^{-w})=e^{-\phi _{s,t}(w)}.
\end{equation*}
Therefore, there exists $k_{s,t}(w)\in \mathbb{Z}$ such that $\phi
_{s,t}(w+2\pi i)=\phi _{s,t}(w)+2k_{s,t}(w)\pi i.$ Moreover, the continuity properties in $s$, $t$ and $w$ force 
the existence of some $K\in \mathbb{Z}$ such $k_{s,t}(w)=K,\,$\ for all $0\leq s\leq t $
and for all $w\in \mathbb{H}.$

On the other hand, for $0\leq s\leq t$,
\begin{eqnarray*}
\phi _{0,t}(1)+2K\pi i &=&\phi _{0,t}(1+2\pi i)=\phi _{s,t}(\phi
_{0,s}(1+2\pi i)) \\
&=&\phi _{s,t}(\phi _{0,s}(1)+2K\pi i)=\phi _{s,t}(\phi _{0,s}(1))+2KK\pi i
\\
&=&\phi _{0,t}(1)+2K^{2}\pi i.
\end{eqnarray*}
Hence, $K=0,1.$ Since all the functions $\phi _{s,t}$ are univalent, we see that $K=1$ and this proves statement $(ii).$

Fix $0\leq s\leq t.$ Then, using again $(i)$, for any $x>0,$ we have that
\begin{equation*}
\frac{\psi _{s,t}(e^{-x})}{e^{-x}}=\exp (-\phi _{s,t}(x)+x).
\end{equation*}%
Taking limits when $x$ tending to $+\infty$ and recalling that $\psi
_{s,t}^{\prime }(0)\neq 0,$ we obtain
\begin{equation*}
\lim_{x\rightarrow +\infty }(\phi _{s,t}(x)-x)=-\log (\psi _{s,t}^{\prime
}(0))=-\log (\varphi _{s,t}^{\prime
}(\tau)).
\end{equation*}%
In particular, $\lim_{x\rightarrow +\infty }\dfrac{\phi _{s,t}(x)}{x}=1$. That is, the angular derivative of
$\phi_{s,t}$ at $\infty$ is one so $\phi _{s,t}$ is parabolic. Hence, by
the Julia-Caratheodory Theorem, either $\phi _{s,t}-\mathrm{id}_\h$ is constant or belongs to $\mathrm{Hol}(\mathbb{H};\mathbb{H})$. Finally, applying Lindeloff's Theorem, we deduce that
$\lim_{w\rightarrow \infty }(\phi _{s,t}(w)-w)=-\log (\varphi _{s,t}^{\prime
}(\tau))$
in the angular sense.

\smallskip

\noindent [Step II] One and only one of the following four cases holds:

$(II.1)$ For every $s\geq 0,$ $\lim_{t\rightarrow +\infty }\psi _{s,t}=0$
uniformly on compact subsets of $\mathbb{D}.$

$(II.2)$ For all $s\geq 0,$ there exists a univalent self-map of the unit
disk $h_{s}$ such that $\lim_{t\rightarrow +\infty }\psi _{s,t}=h_{s},$
uniformly on compact subsets of $\mathbb{D}$.

$(II.3)$ For all $s\geq 0$ and for all $z\in \mathbb{D\setminus}\{0\},$
there exists $0<r(s,z)\leq |z|$ such that $\omega (s,z)=C(0,r(s,z)).$

$(II.4)$ For all $s\geq 0$ and for all $z\in \mathbb{D\setminus}\{0\}$,
there exists $0<r(s,z)\leq |z|$ such that $\omega (s,z)$ is a proper
closed arc of the circumference $C(0,r(s,z)).$ Moreover, the euclidian
angular extent of $\omega (s,z)$ does not depend neither on $s$ nor on $z.$

\smallskip

\noindent [Proof of Step II] According to [Step I], there exists an evolution family
$(\phi _{s,t})$ in $\mathbb{H}$ with common Denjoy-Wolff point $\infty$ such that
\begin{equation*}
\psi _{s,t}(e^{-w})=\exp \left( -\phi _{s,t}(w)\right) ,\text{ }%
0\leq s\leq t,\text{ }w\in \mathbb{H}\text{.}
\end{equation*}

Moreover, applying [Step II] of Theorem \ref{mainth boundary} to $(\phi _{s,t})$ (see also Remark \ref{remark Step II}), we find that
one and only one of the following two cases holds:

$(i)$ For every $w\in \mathbb{H}$ and for every $s\geq 0,$ $%
\lim_{t\rightarrow +\infty }\Re\phi _{s,t}(w)=+\infty $.

$(ii)$ For every $w\in \mathbb{H}$ and for every $s\geq 0,$
$$
\beta_{s}(w):=\lim_{t\rightarrow +\infty }\Re\phi _{s,t}(w)<+\infty
$$ and $\beta _{s}$ is a harmonic function of $\mathbb{H}.$

Assume that we are in case $(i).$ Fix $s\geq 0$ and $z\in \mathbb{D\setminus
\{}0\mathbb{\}}$ and take any $w\in \mathbb{H}$ such that $e^{-w}=z.$ Then,
\begin{equation*}
|\psi _{s,t}(z)|=|\psi _{s,t}(e^{-w})|=|\exp \left( -\phi _{s,t}(w)\right)
|=\exp \left( -\Re\phi _{s,t}(w)\right) \overset{t\rightarrow +\infty }%
{\longrightarrow }0.
\end{equation*}%
Then, applying Vitali's Theorem, we deduce that $\lim_{t\rightarrow +\infty
}\psi _{s,t}=0$ uniformly on compact subsets of $\mathbb{D}.$ Clearly, this
corresponds to $(II.1).$

Assume now that we are in case $(ii).$ Applying [Step I] of Theorem \ref{mainth boundary} to
$(\phi _{s,t})$, we find that only two possible subcases can arise:

$(ii.a)$ For every $w\in \mathbb{H}$ and every $s\geq 0,$ $%
\lim_{t\rightarrow +\infty }\Im\phi _{s,t}(w)=\infty .$

$(ii.b)$ For every $w\in \mathbb{H}$ and every $s\geq 0,$ the family
$(\phi _{s,t}(w))_{t}$ does not converges to $\infty.$

If case $(ii.a)$ holds, bearing in mind the continuity of $(\phi _{s,t})$
with respect to $t,$ we find that, for every $s\geq 0$ and $z\in \mathbb{D\setminus}\{0\},$
\begin{equation*}
\omega (s,z)=C(0,e^{-\beta _{s}(w)}),\text{ }
\end{equation*}
where $w\in \mathbb{H}$ is any number such that $e^{-w}=z$. Using the Schwarz
Lemma, we note that
\begin{eqnarray*}
0<\exp (-\beta _{s}(w)) &=&\lim_{t\rightarrow +\infty }\exp (-\Re\phi
_{s,t}(w))=e^{-\Re w}\lim_{t\rightarrow +\infty }|\exp (-\phi
_{s,t}(w)+w)| \\
&=&e^{-\Re w}\lim_{t\rightarrow +\infty }|\frac{\psi _{s,t}(z)}{z}|\leq
e^{-\Re w}=|z|.
\end{eqnarray*}%
This subcase corresponds to $(II.3).$

Let us pass to case $(ii.b).$ Now, we are assuming $(\phi _{s,t})$ fails [Step I] from Theorem \ref{mainth boundary}. Therefore, we can apply [Step IV] from Theorem \ref{mainth boundary} to that family $(\phi _{s,t}).$ Hence, the
following three subcases can happen:

$(ii.b.1)$ For all $s\geq 0,$ there exists a univalent function $f_{s}\in
\mathrm{Hol}(\mathbb{H};\mathbb{H})$ such that $\lim_{t\rightarrow +\infty
}\phi _{s,t}=f_{s}$ uniformly on compact subsets of $\mathbb{H}$.

$(ii.b.2)$ For all $s\geq 0,$ there exists $f_{s}\in \mathrm{Hol}(\mathbb{H};
\mathbb{H})$ such that, for all $w\in \mathbb{H}$, we have that the $\omega$-limit of the trajectory $t\mapsto\phi _{s,t}(w)$
is $f_{s}(w)\oplus iA,$ where $A$ is the union of $\infty $ and a closed unbounded interval of real numbers (it could be proper or the whole real line). Moreover, $\Re f_{s}(w)\geq \Re w,$ for each $w\in \mathbb{H}.$

$(ii.b.3)$ For all $s\geq 0,$ there exists $f_{s}\in \mathrm{Hol}(\mathbb{H};%
\mathbb{H})$ such that, for all $w\in \mathbb{H}$, we have that the $\omega$-limit of the trajectory $t\mapsto\phi _{s,t}(w)$
is $f_{s}(w)\oplus iA,$ where $A$ is a closed finite interval of real
numbers. Moreover, $\Re f_{s}(w)\geq \Re w,$ for each $w\in \mathbb{H}.$

We begin by analyzing case $(ii.b.1)$. Fix $s\geq 0$ and $z\in \mathbb{%
D\setminus \{}0\mathbb{\}}$ and take any $w\in \mathbb{H}$ such that $%
e^{-w}=z.$ Then,
\begin{equation*}
\psi _{s,t}(z)=\psi _{s,t}(e^{-w})=\exp \left( -\phi _{s,t}(w)\right)
\overset{t\rightarrow +\infty }{\longrightarrow }\exp \left(
-f_{s}(w)\right) .
\end{equation*}%
Trivially, $\psi _{s,t}(0)=0,$ so the family $(\psi _{s,t})_t$ is pointwise
convergent in the unit disk. At the same time, that family is also a
relatively compact subset of $\mathrm{Hol}(\mathbb{D};\mathbb{C})$. Hence,
there exists a holomorphic map $h_{s}\in\mathrm{Hol}(\mathbb{D};\mathbb{C})$ such that
$\lim_{t\rightarrow +\infty }\psi _{s,t}=h_{s}$ uniformly on compact subsets
of $\mathbb{D}$. Obviously, $h_{s}(z)=\exp \left( -f_{s}(w)\right) ,$ for any
couple $(z,w)\in \mathbb{D}\setminus \{0\}\times \mathbb{H}$ such that $%
e^{-w}=z.$ \ Let us show that $h_{s}$ is univalent. Since every $\psi _{s,t}$
is univalent, according to Hurwitz's Theorem, either $h_{s}$ is univalent or is a constant.
In this last case, that constant should be zero so we would have that
$\exp \left( -f_{s}(w)\right) =0$ which is absurd.
Of course, this case corresponds to $(II.2).$

Now, we treat cases $(ii.b.2)$ and $(ii.b.3)$ together. Fix $s\geq 0$ and $%
z\in \mathbb{D\setminus \{}0\mathbb{\}}$ and take any $w\in \mathbb{H}$ such
that $e^{-w}=z$ Then, computing we find
\begin{equation*}
\omega (s,z)=\{e^{-\Re f_{s}(w)}e^{-i\Im f_{s}(w)}e^{-it}:t\in A\}.
\end{equation*}%
If the length of the closed interval $A$ is greater or equal than $2\pi$ (this happens always in case $(ii.b.2)$),
we deduce that $\omega (s,z)$ is the whole circumference $C(0,e^{-\Re f_{s}(w)}).$ This corresponds to $(II.3)$. Otherwise, $\omega (s,z)$ is
a proper closed arc of that circumference $C(0,e^{-\Re f_{s}(w)}).$ In this case, the euclidian angular extent of the above arc only depends on $A$ which is independent on $s$ and on $w$. Certainly, this case is in correspondence with $(II.4).$ We also note that, in both cases and if $e^{-w}=z$,
\begin{equation*}
e^{-\Re f_{s}(w)}\leq e^{-\Re w}=|e^{-w}|=|z|.
\end{equation*}

\smallskip

\noindent [Proof of the Theorem] By means of the canonical map $\alpha_\tau$, the theorem follows directly by translating to the family $(\varphi_{s,t})$ the results given in [Step II] for the family $(\psi_{s,t})$. At this respect, we recall that, for every $r\in(0,1),$
\begin{equation*}
\alpha_{\tau }(C(0,r))=C_H(\tau,\mathrm{log}\frac{1+r}{1-r}).
\end{equation*}
\end{proof}

\begin{rem} The statement given in [Step I] tell us that each evolution family in the unit disk with a common Denjoy-Wolff point in $\D$ is deeply related to another evolution family in the right half-plain having $\infty$ as the common Denjoy-Wolff point with all of their iterates having finite angular derivative of second order at $\infty$ (see \cite{Contreras-Diaz-Pommerenke:Derivada-segunda} for more details).
\end{rem}

\begin{rem} In classical Loewner theory as well as in the semigroup framework, only cases $(1)$ and $(3.a)$ appear. Indeed, in this last case, the item $(1)$ is related to the so-called Continuous Denjoy-Wolff Convergence Theorem and the second item is linked to the dynamical behavior of semigroups of elliptic automorphisms (see also \cite{Shoiket}).
\end{rem}

It is also possible to provide easy examples of the four types of dynamical behavior mentioned in the above theorem. Indeed, in this inner context, we can provide them using evolution families whose elements are just linear functions. Namely, consider
$$
\varphi_{s,t}(z):=\exp{(\nu(s)-\nu(t)})z,\quad z\in\D,\quad 0\leq s\leq t.
$$
Then, choosing
\begin{enumerate}
\item $\nu(t)=t$, we have an example of case (1).
\item $\nu(t)=\arctan(t)$, we have an example of case (2).
\item $\nu(t)=it$, we have an example of case (3.a).
\item $\nu(t)=i\pi|\sin(t)|$, we have an example of case (3.b).
\end{enumerate}


\section{Additional Results}

\subsection{The Role of the Spectral Function}
From the very beginning, derivatives of the maps $\varphi_{s,t}$ at the fixed point zero have played a prominent role in asymptotic questions (also in Loewner parametrization method) for the Radial Loewner Equation. The idea behind is a normalization procedure which seems to go back to Koenings linearization method \cite[Theorem 8.2]{Milnor}. There, it is important (and easy to check) that $\varphi_{s,t}^\prime(0)=\exp{(\lambda(s)-\lambda(t))}$ for a certain map $\lambda$ called the spectral function of the family. In \cite{BCM1}, the following general result about the existence of such functions is given.

\begin{thm}\label{existencia de lambda}
Let $(\varphi _{s,t})$ be a non-trivial evolution family in $\mathbb{D}$ with
common Denjoy-Wolff point $\tau \in \overline{\mathbb{D}}$. Then, there exists a
unique spectral function $\lambda~:[0,+\infty )\longrightarrow
\mathbb{C}$ such that
\begin{equation}
\varphi _{s,t}^{\prime }(\tau )=\exp (\lambda (s)-\lambda (t)),\text{ }0\leq
s\leq t.
\end{equation}%
Moreover, if $\tau \in \partial \mathbb{D}$, then $\lambda (t)\in \lbrack
0,+\infty ),$ for every $t\geq 0.$

As usual, by spectral function, we mean that
\begin{enumerate}
\item $\lambda$ is locally absolutely continuous.
\item $\lambda (0)=0.$
\item For any $t\geq 0,$ $\Re\lambda (t)\geq 0.$
\item The map $t\in \lbrack 0,+\infty )\longrightarrow \Re\lambda
(t)\in \mathbb{R}$ is non-decreasing.
\end{enumerate}
\end{thm}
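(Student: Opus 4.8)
The plan is to establish Theorem \ref{existencia de lambda} by reducing it to the two cases already analyzed --- the inner case ($\tau\in\D$) and the boundary case ($\tau\in\partial\D$) --- and in each case extracting the function $\lambda$ from the multiplicative cocycle identity satisfied by the derivatives. First I would fix the cocycle property: from the evolution family relation $\varphi_{s,t}=\varphi_{u,t}\circ\varphi_{s,u}$ and the chain rule at the common fixed point $\tau$ (which, in the boundary case, is a fixed point in the angular sense so $\varphi_{s,t}^\prime(\tau)$ denotes the angular derivative), one gets
\begin{equation*}
\varphi_{s,t}^\prime(\tau)=\varphi_{u,t}^\prime(\tau)\,\varphi_{s,u}^\prime(\tau),\qquad 0\le s\le u\le t,
\end{equation*}
and $\varphi_{s,s}^\prime(\tau)=1$. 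Writing $a(s,t):=\varphi_{s,t}^\prime(\tau)$, this is exactly a (scalar, nonvanishing) multiplicative cocycle, and the natural candidate is $\lambda(t):=-\log a(0,t)$ (with $\lambda(0)=0$), after which $a(s,t)=a(0,t)/a(0,s)=\exp(\lambda(s)-\lambda(t))$ follows formally. The real content is: (a) choosing a coherent branch of the logarithm so that $\lambda$ is well-defined and locally absolutely continuous; (b) checking properties (3) and (4); and (c) in the boundary case, upgrading to $\lambda(t)\in[0,+\infty)$.

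For the regularity, I would use that evolution families are locally absolutely continuous in $(s,t)$ (as recalled in Section \ref{evolution families}), which transfers --- via Cauchy estimates applied on a small disk/horocycle around $\tau$ --- to local absolute continuity of $t\mapsto a(0,t)$ in the inner case. In the boundary case one instead works with the associated right half-plane family $(\phi_{s,t})$ and the Julia--Carath\'eodory angular derivative $\phi_{s,t}^\prime(\infty)$, using that $\sigma_\tau$ conjugates the two and that the angular derivative at $\infty$ is multiplicative under composition; regularity then comes from the half-plane Herglotz representation. Since $a(0,t)$ is continuous, nonvanishing, and $a(0,0)=1$, it lifts to a continuous (indeed locally absolutely continuous) $\lambda$ with $\lambda(0)=0$; uniqueness of $\lambda$ is immediate from $\lambda(0)=0$ together with the identity $\varphi_{0,t}^\prime(\tau)=\exp(-\lambda(t))$.

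For properties (3)--(4), the key is the Schwarz--Pick / Julia--Carath\'eodory inequality. In the inner case, after conjugating $\tau$ to $0$ as in the proof of Theorem \ref{mainth inner}, the Schwarz Lemma gives $|\psi_{s,t}^\prime(0)|\le 1$, i.e. $\Re\lambda(t)\ge\Re\lambda(s)$ for $s\le t$; taking $s=0$ yields $\Re\lambda(t)\ge 0$ and monotonicity at once. In the boundary case, the Julia--Carath\'eodory Theorem forces the angular derivative $\phi_{s,t}^\prime(\infty)\ge 1$ (a self-map of $\h$ fixing $\infty$ has angular derivative at least one there), which translates to $\varphi_{s,t}^\prime(\tau)\in(0,1]$, hence $\varphi_{s,t}^\prime(\tau)$ is real and positive, so $\lambda(t)=-\log\varphi_{0,t}^\prime(\tau)\in[0,+\infty)$ --- and the same inequality applied to $\varphi_{s,t}$ gives monotonicity. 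This simultaneously disposes of the boundary refinement and of (3)--(4).

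The main obstacle I anticipate is purely the branch-of-logarithm / measurability bookkeeping: one must argue that the locally absolutely continuous, nonvanishing $\Complex$-valued function $t\mapsto\varphi_{0,t}^\prime(\tau)$ admits a locally absolutely continuous logarithm --- which is standard (the continuous lift exists since $[0,+\infty)$ is simply connected, and absolute continuity is preserved because $\log$ is locally Lipschitz away from $0$) --- together with verifying that the angular derivative $\varphi_{s,t}^\prime(\tau)$ genuinely behaves multiplicatively in the boundary case, where $\varphi_{s,t}$ need not be smooth up to $\partial\D$. The chain-rule-for-angular-derivatives step is the delicate one, but it is exactly the classical composition rule for Julia--Carath\'eodory derivatives and can be cited; alternatively, one can sidestep it entirely by passing to $(\phi_{s,t})$ in $\h$, where $\phi_{s,t}^\prime(\infty)$ is the reciprocal of the coefficient of the leading term and multiplicativity is transparent. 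Everything else is routine.
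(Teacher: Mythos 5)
The paper does not actually prove Theorem \ref{existencia de lambda}: it imports it from \cite{BCM1}, where $\lambda$ is built directly from the Herglotz vector field of the family --- in the inner case $\lambda(t)=\int_0^t p(0,\xi)\,d\xi$ (a formula this paper uses later in Section 4), and in the boundary case an analogous integral of the angular ``mass'' of the vector field at $\tau$ --- so that local absolute continuity, $\lambda(0)=0$, and the monotonicity of $\Re\lambda$ all come for free from an integral representation. Your route is genuinely different and, in outline, correct: you never touch the vector field, extracting $\lambda$ instead from the multiplicative cocycle $\varphi_{s,t}'(\tau)=\varphi_{u,t}'(\tau)\,\varphi_{s,u}'(\tau)$, with positivity and monotonicity from Schwarz--Pick in the inner case and from $\varphi_{s,t}'(\tau)\in(0,1]$ at the boundary Denjoy--Wolff point in the boundary case, and uniqueness from continuity of $\lambda$ together with $\lambda(0)=0$ (note it is the continuity, part of the definition of spectral function, that kills the $2\pi i\mathbb{Z}$ ambiguity, not $\lambda(0)=0$ alone). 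What the ODE approach of \cite{BCM1} buys is an explicit formula for $\lambda$ and regularity for free; what your approach buys is a softer, essentially function-theoretic argument that needs no knowledge of the generator.

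The one step where your sketch is too thin is the local absolute continuity of $t\mapsto\varphi_{0,t}'(\tau)$ in the boundary case: ``regularity then comes from the half-plane Herglotz representation'' is not an argument, because the angular derivative is a limit/infimum at $\infty$, exactly where the pointwise absolute-continuity bounds in the definition of an evolution family give no uniform control. The gap is fixable in the spirit you indicate: setting $c(t,t'):=\phi_{t,t'}'(\infty)\ge 1$, Julia's inequality $\Re\phi_{t,t'}(w)\ge c(t,t')\,\Re w$ evaluated at the single point $w=1$ gives
\begin{equation*}
0\le \lambda(t')-\lambda(t)=\log c(t,t')\le c(t,t')-1\le \Re\phi_{t,t'}(1)-1\le |\phi_{t,t'}(1)-1|\le \int_t^{t'}\widetilde{k}(\xi)\,d\xi ,
\end{equation*}
where $\widetilde{k}\in L^1_{loc}$ comes from property (3) of the definition (with $s=u=t$ there) transported to $\h$, and this modulus together with the cocycle identity yields local absolute continuity. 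A similar caveat applies to the inner case: your Cauchy-estimate step needs the majorant $k_{z,T}$ to be uniform over a circle around $\tau$, which is true but is proved in \cite{BCM1} rather than contained in the definition as stated here; alternatively a Schwarz--Pick estimate at one interior point works just as in the boundary case. With these repairs your plan goes through.
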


\begin{rem}
When $\tau \in \partial \mathbb{D}$, the derivative in the above theorem
must be understood in the angular sense. It is easy to see that, given a
spectral function $\lambda $, there are many evolution families with common
Denjoy-Wolff point $\tau $ having $\lambda $ as the associated spectral
function. Moreover, $\tau $ can be located arbitrarily in
$\overline{\mathbb{D}}.$
\end{rem}
Let us analyze the role of $\lambda$ in our version of the Denjoy-Wolff Theorem.
\subsubsection{The boundary case}
We maintain the notations of Theorem \ref{mainth boundary} and its proof so $(\phi_{s,t})$ denotes the associated evolution family in the right half-plane. Since $\lambda $ is
non-decreasing and takes always non-negative real values, the
limit $L:~=\lim_{t\rightarrow +\infty }\lambda (t)\in \lbrack 0,+\infty ]$
always exists. If $L=+\infty $ and, applying Theorem \ref{existencia de lambda}, we find that
\begin{equation*}
\Re\phi _{0,t}(1)\geq e^{\lambda (t)}\Re w\overset{t\rightarrow
+\infty }{\longrightarrow }+\infty
\end{equation*}%
so $\lim_{t\rightarrow +\infty }\phi _{0,t}(1)=\infty .$ Hence, we are in case $(1)$ of Theorem \ref{mainth boundary}. The other possibility is that $L\in \lbrack 0,+\infty
).$ Then, for every $w\in \mathbb{H}$ and every $s\geq0$,
\begin{eqnarray*}
\Re f_{s}(w) &=&\Re\theta _{s}(w):=\lim_{t\rightarrow +\infty }\Re\phi
_{s,t}(w)\\&\geq& \lim_{t\rightarrow +\infty }e^{\lambda (t)-\lambda (s)}\Re w=e^{L-\lambda (s)}\Re w.
\end{eqnarray*}
Notice that now we are in cases $(2)$ or $(3)$ and, in this last case, the value $L$ is related to the factor of the associated horocycle mentioned in that theorem. Certainly, the type of convergence showed in case $(1)$ can even happen when $L$ is finite.

\subsubsection{The inner case}
We also maintain the notations of Theorem \ref{mainth inner} and its proof so $(\psi_{s,t})$ denotes the associated evolution family in $\D$ with zero as the common Denjoy-Wolff point. Recall that $\varphi_{s,t}^\prime(\tau)=\psi_{s,t}^\prime(0)$, for all $0\leq s\leq t$.

\begin{prop} As in former subsection, let $L:=\lim_{t\rightarrow +\infty }\Re\lambda (t)\in \lbrack 0,+\infty ]$. Then, $L=+\infty$ if and only if for any (resp. some) $s\geq0$, the family $(\psi_{s,t})_t$ tends to zero uniformly on compact subsets of $\D$ when $t$ goes to $+\infty$.
\end{prop}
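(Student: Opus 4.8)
The plan is to bypass the half-plane picture used in the proof of Theorem~\ref{mainth inner} and argue directly with the conjugated family $(\psi_{s,t})$ in $\mathbb{D}$, squeezing $\log|\psi_{s,t}(z)|$ between two fixed multiples of $\Re\lambda(t)-\Re\lambda(s)$ by means of Harnack's inequality applied to the real part of the associated Herglotz vector field.

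First I would record the algebra. By Theorem~\ref{existencia de lambda} applied to $(\psi_{s,t})$, and since $\varphi_{s,t}'(\tau)=\psi_{s,t}'(0)$, we have $\psi_{s,t}'(0)=\exp(\lambda(s)-\lambda(t))$; moreover $0\leq\Re\lambda(s)\leq\Re\lambda(t)$ and $\Re\lambda(t)\nearrow L$. As recalled in the proof of Theorem~\ref{mainth inner}, an associated vector field of $(\psi_{s,t})$ is $F(z,t)=-z\,p(z,t)$ with $p$ a generalized Herglotz function in $\mathbb{D}$, so $\Re p(\cdot,t)\geq 0$ for every $t$. Differentiating the weak identity $\tfrac{d}{dt}\psi_{s,t}(z)=-\psi_{s,t}(z)\,p(\psi_{s,t}(z),t)$ with respect to $z$ at $z=0$ and using $\psi_{s,t}(0)=0$ gives $\tfrac{d}{dt}\psi_{s,t}'(0)=-p(0,t)\,\psi_{s,t}'(0)$ for a.e.\ $t$; since $\psi_{s,s}'(0)=1$, comparison with the previous line yields
\begin{equation*}
\lambda(t)-\lambda(s)=\int_s^t p(0,\xi)\,d\xi,\qquad\text{hence}\qquad\Re\lambda(t)-\Re\lambda(s)=\int_s^t\Re p(0,\xi)\,d\xi .
\end{equation*}
The central estimate I want is: for every $0\leq s\leq t$ and every $z\in\mathbb{D}\setminus\{0\}$,
\begin{equation*}
-\frac{1+|z|}{1-|z|}\bigl(\Re\lambda(t)-\Re\lambda(s)\bigr)\ \leq\ \log\frac{|\psi_{s,t}(z)|}{|z|}\ \leq\ -\frac{1-|z|}{1+|z|}\bigl(\Re\lambda(t)-\Re\lambda(s)\bigr).
\end{equation*}
To prove it, fix $s$ and $z\neq 0$. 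As $(\psi_{s,t})$ is univalent with $\psi_{s,t}(0)=0$, the trajectory $t\mapsto\psi_{s,t}(z)$ is locally absolutely continuous and never vanishes, so $t\mapsto\log|\psi_{s,t}(z)|$ is locally absolutely continuous and, by the vector field,
\begin{equation*}
\frac{d}{dt}\log|\psi_{s,t}(z)|=\Re\Bigl(\tfrac{1}{\psi_{s,t}(z)}\,\tfrac{d}{dt}\psi_{s,t}(z)\Bigr)=-\Re p(\psi_{s,t}(z),t)\qquad\text{a.e.}
\end{equation*}
Integrating from $s$ to $t$ gives $\log\tfrac{|\psi_{s,t}(z)|}{|z|}=-\int_s^t\Re p(\psi_{s,\xi}(z),\xi)\,d\xi$. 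By the Schwarz Lemma, $|\psi_{s,\xi}(z)|\leq|z|$ for $\xi\geq s$, so applying Harnack's inequality for the non-negative harmonic function $\Re p(\cdot,\xi)$ on $\mathbb{D}$ at the point $\psi_{s,\xi}(z)\in\overline{D(0,|z|)}$,
\begin{equation*}
\frac{1-|z|}{1+|z|}\,\Re p(0,\xi)\ \leq\ \Re p(\psi_{s,\xi}(z),\xi)\ \leq\ \frac{1+|z|}{1-|z|}\,\Re p(0,\xi),\qquad \xi\geq s,
\end{equation*}
and integrating over $[s,t]$, together with the identity $\int_s^t\Re p(0,\xi)\,d\xi=\Re\lambda(t)-\Re\lambda(s)$, gives the two-sided bound.

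Finally I would conclude. If $L=+\infty$, then fixing any $s\geq 0$ and any $z\in\mathbb{D}\setminus\{0\}$ the upper bound forces $\log\tfrac{|\psi_{s,t}(z)|}{|z|}\to-\infty$ as $t\to+\infty$ (because $\Re\lambda(s)<+\infty$ while $\Re\lambda(t)\to+\infty$), so $\psi_{s,t}(z)\to 0$; together with $\psi_{s,t}(0)=0$ this is pointwise convergence of $(\psi_{s,t})_t$ to $0$ on $\mathbb{D}$, and, the family being bounded, Vitali's Theorem upgrades it to convergence uniform on compact subsets --- for every $s$. Conversely, if $L<+\infty$, then for every $s\geq 0$, every $t\geq s$ and every $z\in\mathbb{D}\setminus\{0\}$ the lower bound gives $|\psi_{s,t}(z)|\geq|z|\exp\!\bigl(-\tfrac{1+|z|}{1-|z|}L\bigr)>0$ (using $\Re\lambda(t)-\Re\lambda(s)\leq\Re\lambda(t)\leq L$), so $(\psi_{s,t})_t$ does not tend to $0$ even pointwise at any point of $\mathbb{D}\setminus\{0\}$; in particular there is no $s$ for which $(\psi_{s,t})_t$ converges to $0$ locally uniformly. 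Combining the two implications proves simultaneously the ``any~$s$'' and the ``some~$s$'' forms of the statement. I expect the only genuine work to be the central two-sided estimate, and within it the point that forces Harnack's inequality: turning the trajectory integral $\int_s^t\Re p(\psi_{s,\xi}(z),\xi)\,d\xi$ into something controlled by the purely spectral quantity $\int_s^t\Re p(0,\xi)\,d\xi$. The measure-theoretic details (absolute continuity of the trajectory, the chain rule for $\log|\cdot|$, differentiating under the Cauchy integral to obtain $\tfrac{d}{dt}\psi_{s,t}'(0)$) are routine from the standing facts about evolution families and weak holomorphic vector fields.
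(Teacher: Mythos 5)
Your argument is correct, but it takes a genuinely different route from the paper. The paper's own proof is two lines of classical function theory: for the forward implication it applies the Koebe Distortion Theorem to the univalent maps $\psi_{s,t}$ together with $|\psi_{s,t}'(0)|=e^{\Re\lambda(s)-\Re\lambda(t)}$ from Theorem \ref{existencia de lambda}, and for the converse it uses Weierstrass's Theorem (locally uniform convergence $\psi_{s,t}\to 0$ forces $\psi_{s,t}'(0)\to 0$, hence $L=+\infty$). You instead go through the Herglotz vector field $-zp(z,t)$, the identity $\Re\lambda(t)-\Re\lambda(s)=\int_s^t\Re p(0,\xi)\,d\xi$, and Harnack's inequality along trajectories, obtaining the two-sided bound
\begin{equation*}
|z|\exp\Bigl(-\tfrac{1+|z|}{1-|z|}\bigl(\Re\lambda(t)-\Re\lambda(s)\bigr)\Bigr)\;\leq\;|\psi_{s,t}(z)|\;\leq\;|z|\exp\Bigl(-\tfrac{1-|z|}{1+|z|}\bigl(\Re\lambda(t)-\Re\lambda(s)\bigr)\Bigr).
\end{equation*}
What your route buys is a quantitative estimate and a strictly stronger converse: when $L<+\infty$ every trajectory with $z\neq 0$ stays bounded away from $0$, which anticipates the positivity of the radius $r(s,z)$ in Theorem \ref{mainth inner}; note also that you are essentially re-deriving facts the paper only invokes in the subsequent proposition (the representation $\psi_{s,t}(z)=z\exp(-\int_s^t p(\psi_{s,\xi}(z),\xi)d\xi)$ and $\lambda(t)=\int_0^t p(0,\xi)d\xi$, both quoted from \cite{BCM1}). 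The paper's route is shorter and avoids the ODE entirely; in fact your upper bound is interchangeable with the upper half of the distortion theorem, and the lower half of the distortion theorem would give your lower bound too. Two small points to tidy: the complex identity $\lambda(t)-\lambda(s)=\int_s^t p(0,\xi)d\xi$ deduced from equality of exponentials needs a continuity-in-$t$ argument to rule out a $2\pi i$ ambiguity (or simply cite \cite{BCM1}); only the real-part version, obtained by taking moduli, is actually used, so this is harmless. Likewise, differentiating the weak ODE in $z$ at $0$ should be justified via the integral form and Cauchy estimates, as you indicate.
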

\begin{proof}
If $L=+\infty$, then the convergence to zero follows by applying directly the Distortion Theorem \cite[Theorem 1.6]{Pommerenke} and Theorem \ref{existencia de lambda}. On the other hand, if $(\psi_{s,t})_t$ tends to zero uniformly on compact subsets of $\D$ for some $s\geq 0$ then, by Weierstrass's Theorem, we also have that $(\psi_{s,t}^\prime)_t$ tends to zero. Therefore, applying again Theorem \ref{existencia de lambda}, we have $L=+\infty$.
\end{proof}

In case $L<+\infty$, both the real and the imaginary part of $\lambda$ play a significative role. As above, using the Distortion Theorem, we find $\Re\lambda$ partially controls the radius of the hyperbolic disk mentioned in Theorem \ref{mainth inner}.
\begin{prop} Assume that $L<+\infty$. Then, for each $z\in\D\setminus\{0\}$ and $s\geq0$, there  exists a real constant $c=c(s,z)$ such that $\omega_A=e^{ic}\omega_\lambda$, where $\omega_A$ is the $\omega$~-limit of the function $t\in[s,+\infty)\mapsto \exp{(i\mathrm{Arg}(\psi_{s,t}(z)}))$ and $\omega_\lambda$ is the $\omega$-limit of $t\in[0,+\infty)\mapsto \exp{(-i\lambda(t))}$.
\end{prop}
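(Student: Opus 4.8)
The plan is to work in the right half-plane picture supplied by [Step I] of the proof of Theorem~\ref{mainth inner}. Let $(\phi_{s,t})$ be the evolution family in $\h$ with common Denjoy--Wolff point $\infty$ produced there, so that $\psi_{s,t}(e^{-w})=\exp(-\phi_{s,t}(w))$ and, combining part (iii) of that step with Theorem~\ref{existencia de lambda}, $\phi_{s,t}(w)-w\to\lambda(t)-\lambda(s)$ angularly at $\infty$. Since $L<+\infty$, the preceding proposition tells us that $(\psi_{s,t})_t$ does not tend to $0$; hence, running [Step II] of the proof of Theorem~\ref{mainth boundary} for $(\phi_{s,t})$, we are in the branch where $\beta_s(w):=\lim_{t\to+\infty}\Re\phi_{s,t}(w)<+\infty$ for all $w\in\h$, $s\ge0$ (the limit being a supremum, as $t\mapsto\Re\phi_{s,t}(w)$ is non-decreasing). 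Fix $z\in\D\setminus\{0\}$ and $s\ge0$, and pick $w\in\h$ with $e^{-w}=z$. Then $|\psi_{s,t}(z)|=e^{-\Re\phi_{s,t}(w)}\to e^{-\beta_s(w)}>0$ and, since $\psi_{s,t}(z)=e^{-\phi_{s,t}(w)}$,
$$e^{i\,\mathrm{Arg}\,\psi_{s,t}(z)}=\frac{\psi_{s,t}(z)}{|\psi_{s,t}(z)|}=e^{-i\,\Im\phi_{s,t}(w)}.$$
So the whole statement reduces to describing $\Im\phi_{s,t}(w)$ modulo $2\pi$ as $t\to+\infty$ and comparing it with $\Im\lambda(t)$.

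The engine of the proof is the periodicity in part (ii) of [Step I]: $\phi_{s,t}(w+2\pi i)=\phi_{s,t}(w)+2\pi i$, so $\phi_{s,t}(w)-w$ is $2\pi i$-periodic and may be written as $\widetilde h_{s,t}(e^{-w})$ for a holomorphic $\widetilde h_{s,t}$ on $\D\setminus\{0\}$. Its real part is non-negative there, since $\Re\widetilde h_{s,t}(e^{-w})=\Re\phi_{s,t}(w)-\Re w\ge0$. Because the image under $w\mapsto e^{-w}$ of any Stolz angle at $\infty$ contains a punctured neighbourhood of $0$, the angular limit $\phi_{s,t}(w)-w\to\lambda(t)-\lambda(s)$ forces $\widetilde h_{s,t}(\zeta)\to\lambda(t)-\lambda(s)$ as $\zeta\to0$, so the singularity at $0$ is removable: $\widetilde h_{s,t}$ extends to a holomorphic function $\widetilde h_{s,t}\colon\D\to\overline{\h}$ with $\widetilde h_{s,t}(0)=\lambda(t)-\lambda(s)$.

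Now let $t\to+\infty$. The real part converges pointwise on $\D$: at $0$, $\Re\widetilde h_{s,t}(0)=\Re\lambda(t)-\Re\lambda(s)\to L-\Re\lambda(s)$, and at $e^{-w}$, $\Re\widetilde h_{s,t}(e^{-w})=\Re\phi_{s,t}(w)-\Re w\to\beta_s(w)-\Re w$. By Harnack's inequality $\{\Re\widetilde h_{s,t}\}_t$ is locally uniformly bounded, hence converges locally uniformly to a non-negative harmonic function $\rho_s$; applying the Weierstrass theorem to $\widetilde h_{s,t}-i\,\Im\widetilde h_{s,t}(0)$ (holomorphic, real part $\Re\widetilde h_{s,t}$, value at $0$ tending to the real number $L-\Re\lambda(s)$) we get $\widetilde h_{s,t}-i\,\Im\widetilde h_{s,t}(0)\to\rho_s+i\widetilde\rho_s$ locally uniformly, with $\widetilde\rho_s$ the harmonic conjugate of $\rho_s$ vanishing at $0$. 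In particular $\Im\widetilde h_{s,t}(\zeta)-\Im(\lambda(t)-\lambda(s))\to\widetilde\rho_s(\zeta)$, and translating back,
$$\Im\phi_{s,t}(w)=\Im w+\Im\lambda(t)-\Im\lambda(s)+\widetilde\rho_s(e^{-w})+o(1)\qquad(t\to+\infty).$$
Hence, writing $c(s,z):=\mathrm{Arg}\,z+\Im\lambda(s)-\widetilde\rho_s(z)\in\R$ (note $-\Im w\equiv\mathrm{Arg}\,z$, $e^{-w}=z$),
$$e^{i\,\mathrm{Arg}\,\psi_{s,t}(z)}=e^{-i\,\Im\phi_{s,t}(w)}=e^{i\,c(s,z)}\,e^{-i\,\Im\lambda(t)}\,(1+o(1)),$$
so $e^{i\,\mathrm{Arg}\,\psi_{s,t}(z)}$ and $e^{i\,c(s,z)}e^{-i\,\Im\lambda(t)}$ remain at distance $o(1)$; passing to $\omega$-limits gives $\omega_A=e^{i\,c(s,z)}\,\omega_\lambda$ (it is the rotation part $\Im\lambda(t)$ of the spectral function that is relevant here, $\Re\lambda(t)$ having the finite limit $L$).

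The main obstacle — and the only real content — is the passage in the second paragraph from maps $\phi_{s,t}$ that are tied to $\lambda$ solely by an angular limit at the boundary point $\infty$, to genuinely holomorphic functions on all of $\D$ with non-negative real part and prescribed value $\lambda(t)-\lambda(s)$ at the origin; this is exactly where the $2\pi i$-periodicity is used. After that the argument is soft: it is just Harnack together with the fact that locally uniform convergence of harmonic functions passes to their conjugates. One should also say a word on the two minor points used above: that $L<+\infty$ is precisely what puts us in the ``$\beta_s$ finite'' branch (the preceding proposition), and that the constant $c(s,z)$ obtained is real, which is plain from its formula.
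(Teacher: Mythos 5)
Your argument is correct, but it takes a genuinely different route from the paper's. The paper stays in the disk and works directly with the Herglotz data of the Loewner equation: from $\psi_{s,t}(z)=z\exp\bigl(-\int_s^t p(\psi_{s,\xi}(z),\xi)\,d\xi\bigr)$ and $\lambda(t)=\int_0^t p(0,\xi)\,d\xi$, it compares arguments via the pointwise estimate $|\Im p(0,\xi)-\Im p(\psi_{s,\xi}(z),\xi)|\le 2\Re p(0,\xi)\,|z|/(1-|z|^2)$ (Schwarz lemma plus the standard bound for functions of positive real part), so that $L<+\infty$ makes $\int_s^{\infty}(\Im p(0,\xi)-\Im p(\psi_{s,\xi}(z),\xi))\,d\xi$ absolutely convergent; the constant $c$ is then essentially this integral shifted by $\mathrm{Arg}\,z+\Im\lambda(s)$, and the conclusion falls out of an explicit identity for $e^{i\mathrm{Arg}(\psi_{s,t}(z))}$. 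You never touch $p$: you use the logarithmic lift $(\phi_{s,t})$ and its $2\pi i$-periodicity to push $\phi_{s,t}-\mathrm{id}_{\h}$ down to a holomorphic function on $\D$ with non-negative real part and value $\lambda(t)-\lambda(s)$ at the origin, and then conclude by Harnack, normality and convergence of normalized harmonic conjugates. The paper's computation buys brevity and an explicit integral formula for $c$; yours buys a soft, structural proof that reuses the half-plane machinery of Steps I--II and in fact yields a bit more, namely that $\Im\phi_{s,t}(w)-\Im\lambda(t)$ itself converges as $t\to+\infty$ (also implicit in the paper's convergent integral). Two minor remarks: Step I(iii) identifies the angular limit of $\phi_{s,t}-\mathrm{id}_\h$ only with some branch of $-\log\varphi_{s,t}'(\tau)$, i.e.\ with $\lambda(t)-\lambda(s)$ up to an additive element of $2\pi i\mathbb{Z}$; this is harmless in your argument since only $\Re\widetilde h_{s,t}(0)$ and $e^{-i\Im\widetilde h_{s,t}(0)}$ are ever used (and the ambiguity can be removed by a continuity-in-$t$ argument), and your reading of $\omega_\lambda$ as the $\omega$-limit of $e^{-i\Im\lambda(t)}$ is exactly the one the paper's own proof employs.
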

\begin{proof}
Fix $z\in\D\setminus\{0\}$ and $s\geq0$. Some computations involving differential equations (for instance, look at \cite[Section 7]{BCM1}) show that, for all $t\geq s$,
$$
\psi_{s,t}(z)=z\exp{\left(-\int_s^t p(\psi_{s,\xi}(z),\xi)d\xi\right)}
$$
where $F(z,t):=-zp(z,t)$ is an associated vector field and $p$ is the corresponding generalized Herglotz function in the unit disk (see Theorem \ref{BPorta formula with DW-common}). By \cite[Section 7.1, Exercise 2]{Conway} and Schwarz's lemma, we obtain that, for every $\xi\geq s$,
$$
\left\vert\Im p(0,\xi)-\Im p(\psi_{s,\xi}(z) \right\vert\leq 2\Re p(0,\xi)\frac{|z|}{1-|z|^2}.
$$
Since $\Re\lambda(t)$ has finite limit when $t$ tends to $\infty$ and bearing in mind (see again \cite[Section 7]{BCM1})
$$
\lambda(t)=\int_0^t p(0,\xi)d\xi, \text{ for all } t\geq0,
$$ we conclude the integral
$$
\int_s^\infty (\Im p(0,\xi)- \Im p(\psi_{s,\xi}(z),\xi))d\xi
$$
is absolutely convergent with value $c=c(s,z)\in\mathbb{R}$. Now, for $t\geq s$, consider the identity
$$
e^{i\mathrm{Arg}(\psi_{s,t}(z))}=e^{-i\Im\lambda(t)}e^{i\mathrm{Arg}(z)}e^{i\Im\lambda(s)}
\exp{\left(i\int_s^t (\Im p(0,\xi)- \Im p(\psi_{s,\xi}(z),\xi))d\xi\right)}.
$$
Hence, we deduce that $\omega_A=e^{i(\mathrm{Arg (z)}+\Im\lambda (s)+c)}\omega_\lambda$.
\end{proof}

\subsection{Automorphisms}
The examples given in Section 3 suggest to analyze if there is any special feature in the statement of our two main theorems when dealing with evolution families formed by automorphisms. First of all, we want to clarify how automorphisms can appear in those families since it is no longer true (like in the semigroup context or discrete iteration) that if one $\varphi_{s,t}$ ($s<t$) is an automorphism so are the other iterates. In what follows, $\mathrm{Aut}(\D)$ will denote the set of all holomorphic automorphisms of the unit disk.

\begin{prop} Let $(\varphi_{s,t})$ an evolution family. Then, either, for every $s\geq0$,
$$
\sup\{t\in[s,+\infty):\varphi_{s,t}\in\mathrm{Aut}(\D)\}<+\infty,
$$
or there exists $\alpha\geq0$ such that:
\begin{enumerate}
\item For every $0\leq\alpha\leq s\leq t$, $\varphi_{s,t}\in\mathrm{Aut}(\D)$.
\item (if $\alpha>0$) There exists a family $(h_r)_{0\leq r<\alpha}$ of holomorphic self-maps of the unit disk with no $h_r$ belonging to $\mathrm{Aut}(\D)$ such that $\lim_{r\to\alpha}h_r=\mathrm{id}_\D$ locally uniformly in $\D$ and $\varphi_{r,t}=\varphi_{\alpha,t}\circ h_r$, for every $0\leq r<\alpha\leq t$.
\end{enumerate}
\end{prop}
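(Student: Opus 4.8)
The plan is to combine the composition law $\varphi_{s,t}=\varphi_{u,t}\circ\varphi_{s,u}$ with the elementary remark that a surjective univalent self-map of $\D$ is an automorphism. First I would record a \emph{restriction property}: if $\varphi_{s,t}\in\mathrm{Aut}(\D)$ and $s\le u\le v\le t$, then $\varphi_{u,v}\in\mathrm{Aut}(\D)$. Indeed $\varphi_{s,t}=\varphi_{u,t}\circ\varphi_{s,u}$ and $\varphi_{s,t}(\D)=\D$ force $\varphi_{u,t}(\D)=\D$, so $\varphi_{u,t}\in\mathrm{Aut}(\D)$ (univalent and onto); writing then $\varphi_{u,t}=\varphi_{v,t}\circ\varphi_{u,v}$ and repeating the argument gives $\varphi_{v,t}\in\mathrm{Aut}(\D)$, whence $\varphi_{u,v}=\varphi_{v,t}^{-1}\circ\varphi_{u,t}\in\mathrm{Aut}(\D)$. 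In particular, for each $s$ the set $\{t\ge s:\varphi_{s,t}\in\mathrm{Aut}(\D)\}$ is an interval with left endpoint $s$ (it contains $s$ since $\varphi_{s,s}=\mathrm{id}_\D$), so its supremum $T(s)$ equals $+\infty$ precisely when $\varphi_{s,t}\in\mathrm{Aut}(\D)$ for all $t\ge s$; moreover the set $E:=\{s\ge0:T(s)=+\infty\}$ is closed upwards, because if $T(s)=+\infty$ and $s'\ge s$ then the restriction property applied to $\varphi_{s,t}$ (for $t\ge s'$) gives $s'\in E$.

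If $E=\emptyset$ we are in the first alternative. Otherwise put $\alpha:=\inf E\ge0$. For $s>\alpha$ pick $s'\in E$ with $s'<s$; then $s\in E$ by upward closedness. The only delicate point is $s=\alpha$: I claim $\varphi_{\alpha,t}\in\mathrm{Aut}(\D)$ for every $t\ge\alpha$. Fix $t>\alpha$ and a sequence $v_n\downarrow\alpha$; each $\varphi_{v_n,t}$ is an automorphism, and $\varphi_{v_n,t}\to\varphi_{\alpha,t}$ locally uniformly by continuity of evolution families in their variables (equivalently, passing to the limit in $\varphi_{\alpha,t}=\varphi_{v_n,t}\circ\varphi_{\alpha,v_n}$ with $\varphi_{\alpha,v_n}\to\mathrm{id}_\D$ and using Montel's theorem). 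Now a locally uniform limit of automorphisms that is univalent is itself an automorphism: writing $g_n(z)=e^{i\theta_n}\frac{z-a_n}{1-\overline{a_n}\,z}$, one has $|g_n'(0)|=1-|a_n|^2\to|g'(0)|>0$, so $\limsup_n|a_n|<1$ and, along a subsequence, $a_n\to a\in\D$ and $\theta_n\to\theta$, giving $g=e^{i\theta}\frac{z-a}{1-\overline{a}\,z}$. Since every $\varphi_{\alpha,t}$ is univalent, we conclude $\varphi_{\alpha,t}\in\mathrm{Aut}(\D)$, hence $\alpha\in E$ and $E=[\alpha,+\infty)$; this is exactly part (1).

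For part (2) assume $\alpha>0$ and set $h_r:=\varphi_{r,\alpha}$ for $0\le r<\alpha$. Each $h_r$ is a holomorphic self-map of $\D$, and the composition law with intermediate time $\alpha$ gives $\varphi_{r,t}=\varphi_{\alpha,t}\circ h_r$ for all $0\le r<\alpha\le t$. As $r\to\alpha^-$ we have $h_r=\varphi_{r,\alpha}\to\varphi_{\alpha,\alpha}=\mathrm{id}_\D$ locally uniformly, by continuity of evolution families. Finally no $h_r$ is an automorphism: were $\varphi_{r,\alpha}\in\mathrm{Aut}(\D)$ for some $r<\alpha$, then, using part (1), $\varphi_{r,t}=\varphi_{\alpha,t}\circ\varphi_{r,\alpha}\in\mathrm{Aut}(\D)$ for every $t\ge\alpha$, so $T(r)=+\infty$ and $r\in E$, contradicting $r<\alpha=\inf E$.

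The main obstacle is the single instant $s=\alpha$: one must exclude the possibility that the automorphisms $\varphi_{v_n,t}$ degenerate in the limit to a non-surjective univalent map, and this is precisely where the univalence of evolution family elements, combined with the explicit parametrization of $\mathrm{Aut}(\D)$ used above, enters.
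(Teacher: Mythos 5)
Your proof is correct and follows essentially the same route as the paper: propagate the automorphism property via univalence and the composition law, define $\alpha$ as the infimum of starting times from which all later iterates are automorphisms, handle $s=\alpha$ by continuity in $s$, and take $h_r=\varphi_{r,\alpha}$. You merely spell out the details (the restriction property and the normal-families/M\"obius-parametrization argument showing a univalent locally uniform limit of automorphisms is an automorphism) that the paper compresses into ``continuity properties of evolution families.''
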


It is easy to find examples where both situations in the above theorem hold exactly. It is quite natural to name those evolution families described in the second option as evolution families of automorphic type.
\begin{proof}
Assume there exists $s_0\geq0$ with
$$
\sup\{t\in[s_0,+\infty):\varphi_{s_0,t}\in\mathrm{Aut}(\D),
\text{ for all} t\geq s\}=+\infty.
$$
Now, using that all iterates are univalent as well as the algebraic composition property of evolution families we obtain that, indeed, $\varphi_{s,t}\in\mathrm{Aut}(\D)$, for every $s_0\leq s\leq t$. Now, define
$$
\alpha:=\inf\{s\geq0: \varphi_{s,t}\in\mathrm{Aut}(\D)\}.
$$
By the above comment and continuity properties of evolution families in the variables $s$ and $t$, we deduce that $\varphi_{s,t}\in\mathrm{Aut}(\D)$, for every $\alpha\leq s\leq t$. Moreover, whenever $\alpha>0$, for every $0\leq r\leq\alpha$,
$$
h_r:=\varphi_{r,\alpha}\notin \mathrm{Aut}(\D).
$$
It is straightforward to check the family $(h_r)$ has the other properties mentioned in statement $(2)$.
\end{proof}

We will use the following result which has some interest in its own. It might be considered a natural version (an almost everywhere version) for evolution families of a well-known result due to Berkson-Porta for the unit disk and later extended to complex manifolds (see \cite[Section 3.2]{Shoiket}).

\begin{thm}\label{BP ae-everywhere} Let $(\varphi_{s,t})$ an evolution family in the unit disk. Then, there exists $A\subset[0,+\infty)$ a subset of null measure such that, for every $z\in\D$ and every $t~\in(0,+\infty)\setminus A$, the following limit exists
$$
G(z,t):=\lim_{h\rightarrow0^{+}}=\frac{\varphi_{t,t+h}(z)-z}{h}.
$$
Moreover, $G$ defines (almost everywhere) a vector field associated with $(\varphi_{s,t}).$
\end{thm}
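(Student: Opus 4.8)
The plan is to prove Theorem \ref{BP ae-everywhere} by combining the absolute continuity of evolution families (Remark after Definition of evolution families, item (3)) with a Fubini-type argument to extract a single null set $A$ that works simultaneously for all $z\in\D$. First I would fix a countable dense set $\{z_j\}\subset\D$ and use the locally absolutely continuous dependence of $t\mapsto\varphi_{s,t}(z_j)$ (which follows from the estimate in part (3) of the definition of evolution family, plus univalence) to assert that for each $j$ the derivative $\partial_t^{+}\varphi_{t,t+h}(z_j)|_{h=0^+}$ exists for all $t$ outside a null set $A_j$; set $A_0:=\bigcup_j A_j$, still null. The key technical point here is that the integrability bound in the definition provides, for each fixed compact $K$ and each $T$, a dominating function $k_{K,T}\in L^1$, so the difference quotients $(\varphi_{t,t+h}(z)-z)/h$ are uniformly controlled for $z\in K$ and small $h>0$; this is exactly what is needed to pass from a countable dense set to all of $\D$ via Vitali/Montel.

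The second step is to upgrade pointwise (in the dense set) a.e.\ differentiability to: for $t\notin A_0$, the limit $G(z,t)=\lim_{h\to0^+}(\varphi_{t,t+h}(z)-z)/h$ exists for every $z\in\D$ and defines a holomorphic function of $z$. Here I would argue that for $t\notin A_0$ the family $\{(\varphi_{t,t+h}-\mathrm{id}_\D)/h\}_{h>0}$ is, by the $L^1$-domination, a locally bounded (hence normal) family of holomorphic maps on $\D$; since it converges at the points $z_j$ of a dense set, by Vitali's Theorem it converges locally uniformly to a holomorphic limit, which we call $G(\cdot,t)$. This handles conditions (1) (measurability in $t$: $G(z,\cdot)$ is an a.e.\ limit of measurable functions, hence measurable) and (2) (holomorphy in $z$) in Definition \ref{Definicion-VF}, and condition (3) follows because the same $L^1$ bound $k_{K,T}$ dominates $|G(z,t)|$ for $z\in K$, $t\notin A_0$ (pass to the limit in the difference-quotient bound). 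Thus $G$ is a weak holomorphic vector field.

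The third and final step is to identify $G$ with an associated vector field of $(\varphi_{s,t})$, i.e.\ to show that for each fixed $s\geq0$ and $z\in\D$, the map $t\mapsto\varphi_{s,t}(z)$ solves $\tfrac{dz}{dt}=G(z,t)$ with initial condition $z$ at $t=s$. Using the algebraic property $\varphi_{s,t+h}=\varphi_{t,t+h}\circ\varphi_{s,t}$, one writes $$\frac{\varphi_{s,t+h}(z)-\varphi_{s,t}(z)}{h}=\frac{\varphi_{t,t+h}(\varphi_{s,t}(z))-\varphi_{s,t}(z)}{h},$$ and the right-hand side tends to $G(\varphi_{s,t}(z),t)$ as $h\to0^+$ for $t\notin A_0$, by the locally uniform convergence established in step two together with the continuity of $G(\cdot,t)$ at the point $\varphi_{s,t}(z)$ (one must check the base point $\varphi_{s,t}(z)$ stays in a fixed compact for $t$ in a bounded interval, which follows from continuity of $t\mapsto\varphi_{s,t}(z)$). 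Hence $t\mapsto\varphi_{s,t}(z)$ has right derivative $G(\varphi_{s,t}(z),t)$ a.e.; combined with its local absolute continuity this gives that it is the weak solution of the Cauchy problem. By the first theorem relating evolution families to weak holomorphic vector fields (uniqueness of the associated vector field up to a.e.\ equality), $G$ coincides almost everywhere with the associated vector field, completing the proof. I expect the main obstacle to be step two — namely, making rigorous that the exceptional null set $A_0$ extracted from a countable dense set genuinely works for \emph{all} $z\in\D$ simultaneously, which is where the uniform $L^1$-domination and Vitali's theorem must be deployed carefully rather than a pointwise argument for each $z$.
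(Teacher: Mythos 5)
Your Step 1 contains the essential gap, and it is exactly the difficulty that the paper's proof is designed to overcome. Local absolute continuity of $t\mapsto\varphi_{s,t}(z_j)$ gives, for a.e.\ $t$, the existence of $\lim_{h\to0^+}\frac{1}{h}\bigl(\varphi_{s,t+h}(z_j)-\varphi_{s,t}(z_j)\bigr)$; by the algebraic identity $\varphi_{s,t+h}=\varphi_{t,t+h}\circ\varphi_{s,t}$ this is the difference quotient of $\varphi_{t,t+h}$ evaluated at the \emph{moving} point $w=\varphi_{s,t}(z_j)$, not at $z_j$ itself. So what you actually obtain is the existence of $\lim_{h\to0^+}\frac{1}{h}\bigl(\varphi_{t,t+h}(w)-w\bigr)$ only for $w$ in the range $\varphi_{s,t}(\mathbb{D})$, which is in general a proper (possibly very small) subdomain of $\mathbb{D}$ and need not contain any prescribed $z_j$. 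There is no single-variable absolutely continuous function whose a.e.\ derivative is the quotient $\frac{1}{h}\bigl(\varphi_{t,t+h}(z_j)-z_j\bigr)$ with the base time moving and the point frozen, so the claim ``the derivative at $h=0^+$ exists for $t$ outside a null set $A_j$'' is not justified by what you cite. This mismatch is precisely why the paper first gets the limit only on $B_t=\varphi_{0,t}(\mathbb{D})$ (via Theorem 6.4 of \cite{BCM1}) and then must \emph{extend} from that open set to all of $\mathbb{D}$: it does so by noting that each $\frac{1}{h}(\varphi_{t,t+h}-\mathrm{id}_\mathbb{D})$ is an infinitesimal generator, invoking the Harnack-type estimate for generators (\cite[Lemma 2.2]{BCM1}) to get relative compactness of this family, and then using the identity principle, since all accumulation points agree on the open set $B_t$. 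Your proposal supplies no substitute for this extension mechanism.

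The normality claim in your Step 2 is also unsupported as written: the definition of an evolution family provides, for each \emph{single} point $z$ and each $T$, a function $k_{z,T}\in L^1$; it does not provide a dominating function $k_{K,T}$ uniform over a compact set $K$ --- that uniform bound is part of the definition of a weak holomorphic vector field, and obtaining it for the family is essentially the content of the generation theorem quoted in Section 2 (i.e.\ of \cite{BCM1}), not a hypothesis you may read off. Without such uniformity, boundedness of the quotients $\frac{1}{h}(\varphi_{t,t+h}(z_j)-z_j)$ at countably many points does not make $\{\frac{1}{h}(\varphi_{t,t+h}-\mathrm{id}_\mathbb{D}):h>0\}$ a locally bounded family, so Montel/Vitali cannot be applied; some structural input (the infinitesimal-generator estimate used in the paper, or an explicit appeal to the already-known associated Herglotz vector field $G_0$, writing the quotient as the average $\frac1h\int_t^{t+h}G_0(\varphi_{t,\xi}(z),\xi)\,d\xi$ and arguing at Lebesgue points) is indispensable. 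Your Step 3 (identifying the limit with the associated vector field via the composition property and a.e.\ uniqueness) is fine in spirit and matches the paper's concluding assertion, but it rests on the unproved Steps 1--2.
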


\begin{proof}According to \cite[Theorem 6.4]{BCM1}, there exists $A\subset[0,+\infty)$ of null measure such that, for all $t\in(0,+\infty)\setminus A$, we can assure, for all $z\in\D$, the existence of the limit
$$
\frac{\partial \varphi}{\partial t}(z,0,t)=\lim_{h\rightarrow0^{+}}\frac{\varphi_{0,t+h}(z)-\varphi_{0,t}(z)}{h}
$$
and, indeed, that limit defines a holomorphic function in $\D$.
Set $B_t:=\varphi_{0,t}(\D)$, for all $t\geq0$. Since $\varphi_{0,t}$ is univalent, we see that $B_t$ is a complex domain contained in the unit disk.

Now, fix $t\geq 0$ and $z\in B_t$ and take $\widetilde{z}\in\D$ such that $z=\varphi_{0,t}(\widetilde{z})$. Using the algebraic properties of evolution families, we deduce that the following limits also exist
$$
\frac{\partial \varphi}{\partial t}(\widetilde{z},0,t)=\lim_{h\rightarrow0^{+}}
\frac{\varphi_{0,t+h}(\widetilde{z})-\varphi_{0,t}(\widetilde{z})} {h}
=\lim_{h\rightarrow0^{+}}\frac{\varphi_{t,t+h}(z)-z}{h}.
$$
Moreover, by \cite[Remark 3.3.1 and Corollary 3.3.1]{Shoiket}, we know that, all the elements from
$$
\Gamma=\{\frac{1}{h}(\varphi_{t,t+h}-\mathrm{id}_\D):h>0\}
$$
are infinitesimal generators of semigroups of analytic functions. Bearing in mind \cite[Lemma 2.2]{BCM1} and the above convergence fact over the subset $B_t$, we conclude that $\Gamma$ is a relatively compact subset of $\mathrm{Hol}(\D,\mathbb{C})$. The proof ends by using again the just mentioned convergence property and the identity principle for analytic functions.
\end{proof}

\subsubsection{The boundary case}
Assume that $(\varphi_{s,t})$ denotes a non-trivial evolution family of automorphic type (we put $\alpha=0$ for simplicity) in $\D$ with $\tau$ as the common Denjoy-Wolff point and associated spectral function $\lambda$. Likewise, let $(\phi_{s,t})$ be the associated evolution family in the right-half plane with $\infty$ as the common Denjoy-Wolff point (see subsection \ref{sub DW-point}). Since each $\phi_{s,t}$ is again an automorphism and using the Julia-Caratheodory's Lemma, we obtain that, for every $w\in\h$ and $0\leq s\leq t$,
$$
\phi_{s,t}(w)=\exp{\left(\lambda(t)-\lambda(s)\right)}w+ib(s,t),
$$
where $b(s,t)\in\mathbb{R}$. By means of Theorem \ref{BP ae-everywhere} and integrating the resultant ODE, we know that there exists $h\in L^1([0,+\infty),\mathbb{R})$ with
$$
\varphi_{s,t}(w)=\exp{\left(\lambda(t)-\lambda(s)\right)}w
+ie^{\lambda(t)}\int_s^t h(\xi)e^{-\lambda(\xi)}d\xi.
$$
Coming back to Theorem \ref{mainth boundary} (items (2) and (3)) and bearing in mind the above expression, we see that in this case we can give the additional information that
$$
k(z,s)=e^{\Re\lambda(s)-L}k_\D(z,s),
$$
where $L=\lim_{t\rightarrow+\infty}\Re\lambda(t)$.
\subsubsection{The inner case}
Assume that $(\psi_{s,t})$ denotes an evolution family of automorphic type in $\D$ with zero as the common Denjoy-Wolff point and associated spectral function $\lambda$ (again we put $\alpha=0$ for simplicity).

In this case, standard arguments using the Schwarz Lemma show that $\lambda(t)=ib(t)$, with $b(t)\in\mathbb{R}$ and
$$
\psi_{s,t}(z)=\exp{(ib(s)-ib(t))}, \quad z\in\D,\quad 0\leq s\leq t.
$$
Therefore, in this context, we are always in case $(3.a)$ of Theorem \ref{mainth inner}.

\subsection{Non-Tangential Convergence}
When the Denjoy-Wolff point $\tau$ of a holomorphic self-map $\varphi$ of the unit disk belongs to $\partial\D$, one of the most treated associated dynamical problem is to analyze, for each $z\in\D$, whether the sequence $(\varphi_n(z))$ approaches $\tau$ non-tangentially, that is, if $(\varphi_n(z))$ is included in some Stolz angle linked at $\tau$. This is a problem not completely understood yet \cite{Contreras-Diaz-Pommerenke:zoo}. The corresponding one for semigroups has also attracted great attention (see \cite{Asintotico Shoikhet} and the references therein). In our setting, and following the line of what has been done for semigroups, the problem would be to study, for each $s\geq $ and each $z\in\D$, when $\{\varphi_{s,t}(z):s\leq t\}$ is included in some Stolz angle linked at $\tau$.

Our next result shows the above problem does not depend neither on $z$ nor in $s$.
\begin{thm} Let $(\varphi_{s,t})$ be a non-trivial evolution family in the unit disk with common Denjoy-Wolff point $\tau\in\partial\D$. Then, the following are equivalent:
\begin{enumerate}
\item There exist $z\in\D$ and $s\geq0$ such that $(\varphi_{s,t}(z))_t$ converges to $\tau$ non-tangentially when $t$ tends to $\infty$.
\item For every $z\in\D$ and every $s\geq0$, the family $(\varphi_{s,t}(z))_t$ converges to $\tau$ non-tangentially when $t$ tends to $\infty$.
\end{enumerate}
\end{thm}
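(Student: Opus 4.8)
The plan is to reduce everything to the right half-plane, where ``non-tangential convergence to $\tau$'' becomes ``non-tangential convergence to $\infty$'', and then to transport the hypothesis from one trajectory to all of them using the hyperbolic metric. Since $(2)\Rightarrow(1)$ is trivial, I would assume $(1)$ and prove $(2)$. First I would conjugate by the Cayley map $\sigma_\tau$, setting $\phi_{s,t}:=\sigma_\tau\circ\varphi_{s,t}\circ\sigma_\tau^{-1}$ as in the proof of Theorem \ref{mainth boundary}; this is an evolution family in $\h$ with common Denjoy--Wolff point $\infty$. Unwinding the definition of a Stolz angle at $\tau$ through $\sigma_\tau$, a curve $t\mapsto\zeta(t)\in\D$ converges to $\tau$ non-tangentially exactly when $t\mapsto\sigma_\tau(\zeta(t))$ tends to $\infty$ and satisfies $|\Im\sigma_\tau(\zeta(t))|\le C\,\Re\sigma_\tau(\zeta(t))$ for all large $t$ and some $C>0$ (note these two conditions together force $\Re\sigma_\tau(\zeta(t))\to+\infty$). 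So $(1)$ becomes: for some $s_0\ge0$ and $w_0\in\h$, the trajectory $t\mapsto\phi_{s_0,t}(w_0)$ converges to $\infty$ non-tangentially in $\h$; and $(2)$ becomes the same for every $(s,w)\in[0,+\infty)\times\h$.

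The main tool will be the Schwarz--Pick contraction $\rho_{\h}(\phi_{s,t}(w),\phi_{s,t}(w'))\le\rho_{\h}(w,w')$ for $0\le s\le t$, together with the elementary fact that, for $a=x_a+iy_a\in\h$ and $R>0$, the hyperbolic disk $D_H(a,R)$ in $\h$ is the Euclidean disk with centre $x_a\cosh R+iy_a$ and radius $x_a\sinh R$; hence any $w'\in D_H(a,R)$ obeys $x_a e^{-R}\le\Re w'\le x_a e^{R}$ and $|\Im w'|\le |y_a|+x_a\sinh R$. From these I would extract a transport principle $(\star)$: \emph{for fixed $s\ge0$, if $t\mapsto\phi_{s,t}(a)$ converges to $\infty$ non-tangentially for one $a\in\h$, then $t\mapsto\phi_{s,t}(b)$ does so for every $b\in\h$}. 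Its proof: with $R:=\rho_{\h}(a,b)$, Schwarz--Pick puts $\phi_{s,t}(b)$ in $D_H(\phi_{s,t}(a),R)$; if $\phi_{s,t}(a)=x_t+iy_t$ eventually lies in a sector $\{w:|\Im w|\le C\,\Re w\}$ with $x_t\to+\infty$, then $\Re\phi_{s,t}(b)\ge e^{-R}x_t\to+\infty$ and $|\Im\phi_{s,t}(b)|\le |y_t|+x_t\sinh R\le(C+\sinh R)x_t\le e^{R}(C+\sinh R)\,\Re\phi_{s,t}(b)$, so the trajectory from $b$ also converges to $\infty$ inside a Stolz angle.

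With $(\star)$ in hand I would transport $(1)$ to all $(s,w)$ in three moves. (i) Applying $(\star)$ with $s=s_0$ gives that $t\mapsto\phi_{s_0,t}(w)$ converges to $\infty$ non-tangentially for every $w\in\h$. (ii) For $s>s_0$: pick any $v\in\h$ and set $w_*:=\phi_{s_0,s}(v)$; the evolution law gives $\phi_{s,t}(w_*)=\phi_{s_0,t}(v)$ for $t\ge s$, which is non-tangential by (i), and then $(\star)$ at level $s$ yields the conclusion for every $w\in\h$. (iii) For $s<s_0$: the evolution law gives $\phi_{s,t}(w)=\phi_{s_0,t}(\phi_{s,s_0}(w))$ for $t\ge s_0$, which is non-tangential by (i); since non-tangentiality only concerns $t\to+\infty$, the whole trajectory $t\mapsto\phi_{s,t}(w)$ converges to $\infty$ non-tangentially. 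Conjugating back by $\sigma_\tau^{-1}$ gives $(2)$.

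The hard part will be the transport principle $(\star)$ --- concretely, the verification that a bounded hyperbolic displacement in $\h$ maps a Stolz angle at $\infty$ into another (wider) Stolz angle at $\infty$ while preserving $\Re\to+\infty$ --- for which the explicit formula for hyperbolic disks in $\h$ is exactly what is needed. Everything else is routine manipulation of the algebraic composition law of evolution families, plus the remark that whether a trajectory converges to $\tau$ non-tangentially depends only on its tail.
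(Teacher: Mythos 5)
Your proposal is correct and follows essentially the same route as the paper: transfer to the right half-plane via $\sigma_\tau$, use the Schwarz--Pick containment of one trajectory in hyperbolic disks around another to show that a bounded hyperbolic displacement keeps a trajectory in a (wider) Stolz angle at $\infty$, and then spread the conclusion over all $s$ by the evolution-family composition law. The only cosmetic difference is that you verify the Stolz-angle stability by the explicit Euclidean description of hyperbolic disks in $\h$, whereas the paper does the same estimate via the automorphisms $A_t$ fixing $\infty$; both give the identical transport principle.
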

\begin{proof}
Fix $z_0\in\D$ and $s_0\geq 0$ and assume that $(\varphi_{s_0,t}(z_0))_t$ converges to $\tau$ non-tangentially. By Theorem \ref{mainth boundary}, we know that, indeed, $(\varphi_{s,t}(z))_t$ converges to $\tau$ for every $s\geq 0$ and every $z\in\D$. Also we know that, for some $0<\alpha<\pi/2$,
$$
\{\varphi_{s_0,t}(z_0):s\leq t\}\subset S_\alpha:=\{z\in\D:|\mathrm{Arg}(1-\overline{\tau}z)|<\alpha\}.
$$
Now, consider $z\neq z_0$ with $z\in\D$. Then, using the Schwarz-Pick Lemma, we find that,
$$
\{\varphi_{s_0,t}(z):s\leq t\}\subset D_H(z_0,R), \text{ where } R:=\rho _{\mathbb{D}}^{{}}(z_0,z)>0.
$$
We claim that this implies the existence of a Stolz angle $S_\beta$ with $0<\alpha<\beta <\pi/2$ such that $\{\varphi_{s,t}(z):s\leq t\}\subset S_\beta.$ Moreover, bearing in mind that $\varphi_{s,t}=\varphi_{s_0,t}\circ\varphi_{s,s_0}$, for $0\leq s\leq s_0$, we deduce that $(\varphi_{s,t}(z))_t$ converges to $\tau$ non-tangentially, for every $z\in\D$ and every $0\leq s\leq s_0$.
Finally, consider $s\geq s_0$ and $z\in\D$. Since $\varphi_{s_0,t}(z)=\varphi_{s,t}(\varphi_{s_0,s}(z))$, we have a point $u\in\D$ such that $(\varphi_{s,t}(u))_t$ converges to $\tau$ non-tangentially. Then, repeating the argument given above, we also conclude that $(\varphi_{s,t}(z))_t$ converges to $\tau$ non-tangentially.

\noindent Proof of the claim. Consider, for every $t\geq 0$,
$$
c_t:=\sigma_\tau(\varphi_{s_0,t}(z_0))\in\h,\quad w_t=\sigma_\tau(\varphi_{s_0,t}(z))\in\h.
$$
By hypothesis, we know that $w_t$ belongs to the hyperbolic disk $D_H^\h(c_t,R)$ in $\h$ centered at $c_t$ and radius $R$. Moreover, also for all $t\geq0$,
$$
\left\vert\frac{\Im c_t}{\Re c_t}\right\vert\leq \tan(\alpha).
$$
Now, take the unique automorphism $A_t$ in $\h$ which fixes $\infty$ and maps $c_t$ into $1$. Some computations show that $A_t(D_H^\h(1,R))=D_H^\h(c_t,R)$. Therefore, there exists $u_t\in D_H^\h(1,R)$ such that $A_t(u_t)=w_t$. Trivially, $D_H^\h(1,R)$ is included in some Stolz angle in $\h$ linked at zero and $I:=\inf\{\Re w:w\in D_H^\h(1,R)\}<+\infty$. Therefore,
\begin{eqnarray*}
\sup\{\left\vert\frac{\Im w_t}{\Re w_t}\right\vert:t\geq0\}&=&\sup\{\left\vert
\frac{\Re c_t\Im u_t+\Im c_t}{\Re c_t\Re u_t}\right\vert:t\geq0\}\\
&\leq&I \sup\{\left\vert\frac{\Im u_t}{\Re u_t}\right\vert:t\geq0\}\sup\{\left\vert\frac{\Im c_t}{\Re c_t}\right\vert:t\geq0\}<+\infty.
\end{eqnarray*}
Thus, $\{w_t:t\geq 0\}$ is included in a certain Stolz angle in $\h$ linked at zero so $\{\varphi_{s,t}(z))_t:t\geq 0\}$ is included in a certain Stolz angle in $\D$ linked at $\tau$.
\end{proof}



\end{document}